\documentclass[a4paper, oneside,11 pt]{article}

\usepackage[latin1]{inputenc}
\usepackage[T1]{fontenc}
\usepackage[english]{babel}
\usepackage{amsfonts}
\usepackage{amssymb}
\usepackage{amsmath,mathrsfs}
\usepackage{lmodern}
\usepackage{amsthm}
\usepackage{graphicx}
\usepackage{vmargin}

\usepackage[all]{xy}
\linespread{1.0}
\usepackage{verbatim}
\usepackage{amsmath}
\usepackage{amsthm}
\usepackage{graphicx}
\usepackage{amssymb}
\usepackage{epstopdf}
\usepackage{url}
\usepackage{amsfonts}
\usepackage{todonotes}

\def\bN {\mathbf{N}}

\newcommand{\Div}{\operatorname{div}}

\newcommand{\Id}{\operatorname{Id}}

\newcommand{\ba}{\begin{aligned}}
\newcommand{\ea}{\end{aligned}}

\newcommand{\be}{\begin{equation}}
\newcommand{\ee}{\end{equation}}

\def\eps {{\epsilon}}

\def\grad {{\nabla}}

\newcommand{\pt}{\partial}

\newcommand{\nn}{\nonumber}

\newcommand{\br}{\mathbb{R}}

\renewcommand{\eps}{\varepsilon}

\renewcommand{\(}{\left(}
\renewcommand{\)}{\right)}
\renewcommand{\[}{\left[}
\renewcommand{\]}{\right]}

\newtheorem{thm}{Theorem}
\newtheorem{lem}[thm]{Lemma}
\newtheorem{cor}[thm]{Corollary}
\newtheorem{prop}[thm]{Proposition}
\newtheorem{defi}[thm]{Definition}
\newtheorem{remark}[thm]{Remark}

\def\be{\begin{equation}}
\def\ee{\end{equation}}
\def\bea{\begin{eqnarray}}
\def\eea{\end{eqnarray}}

\numberwithin{thm}{section}
\numberwithin{equation}{section}
\numberwithin{figure}{section}

\begin{document}

\title{On the local uniqueness of steady states\\ for the Vlasov-Poisson system}

\author{
Mikaela Iacobelli
  \thanks{ETH Z\"urich, Mathematics Department, R\"amistrasse 101, 8092 Z\"urich, Switzerland. Email: \textsf{mikaela.iacobelli@math.ethz.ch}}
}

\maketitle

\begin{abstract}
Motivated by the results of
Lemou, M\'ehats, and R\"aphael \cite{LMR} and Lemou \cite{Lem}
concerning the quantitative stability of some suitable
steady states for the Vlasov-Poisson system,
we investigate the local uniqueness of steady states near these ones. This is inspired by analogous results of  Choffrut and  \v{S}ver\'ak in the context of the 2D Euler equations \cite{CS}.
\end{abstract}
\section{Introduction}

The gravitational Vlasov-Poisson equation modelizes the evolution of many particles subject to their own gravity, assuming that both the relativistic effects and the collisions between particles can be neglected.
We consider the Vlasov-Poisson system in three dimensions:
\begin{equation}
\label{gvp}
 \left\{ \begin{array}{ccc}\pt_t f+v\cdot \grad_x f-\grad\phi_f\cdot \grad_v f=0, & (t,x,v)\in\br^+\times\br^3\times\br^3 \\
f(0,x,v)=f_0(x,v)\ge0,\ \  \int f_0\,dx\,dv=1.
\end{array} \right.
\end{equation}
where the Newtonian potential $\phi_f$ is given in terms of the density $\rho_f:$
$$
\rho_f(x)=\int_{\br^3}f(x,v)\,dv, \quad\mbox{and}\quad\phi_f(x)=-\frac{1}{4\pi|x|}*\rho_f=K*\rho_f.
$$

At the beginning of the last century, astrophysicist Sir J. Jeans used this system to model stellar clusters and galaxies \cite{Jea} to study their stability properties. In this context, it appears in many textbooks on astrophysics, such as \cite{Bin,Fri}. In the repulsive case, this system was introduced by A. A. Vlasov around 1937 \cite{Vla1,Vla2}.
Because of the considerable importance in plasma physics and astrophysics,
there is a vast literature on the Vlasov-Poisson system.

The global existence and uniqueness of classical solutions of the Cauchy problem for the Vlasov-Poisson system was obtained by Iordanskii \cite{iord} in dimension 1,  Ukai-Okabe \cite{uo} in the 2-dimensional case, and independently by Lions-Perthame \cite{lp} and Pfaffelmoser \cite{Pfa} in the 3-dimensional case (see also \cite{Sch}). There are currently no results about the existence and uniqueness of classical solutions in dimensions greater than 3.

It is important to mention that, parallel to the existence of classical solutions, there have been a considerable amount of work on the existence
of weak solutions, particularly under very low assumptions on the initial data.

We mention in particular the classical result by Arsen'ev \cite{Ar}, who proved the global existence of weak solutions
under the hypothesis that $f_0$ is bounded  and has finite kinetic energy,
and the result of Horst and Hunze \cite{hh}, where the authors relax the integrability assumption on $f_0$.
If one wishes to relax even more the integrability assumptions on the initial data, then one enters into the framework 
of the so-called renormalized solutions introduced by Di Perna and Lions \cite{dpl,dpl3,dpl2}.
The interested reader is referred to the papers \cite{acf,bbc} and the survey \cite{GPI} for more details and references.

One of the main features of the nonlinear transport flow \eqref{gvp} is the conservation of the total energy
\begin{equation}
\label{eq:H}
\mathcal{H}(f(t))=\frac{1}{2}\int_{\br^6}|v|^2f(t,x,v)\,\,dx\,dv-\frac{1}{2}\int_{\br^3}|\grad \phi_f(t,x)|^2\,dx=\mathcal{H}(f(0))
\end{equation}
and of the Casimir functions: for all $G\in C^1([0,\infty], \br^+)$ such that $G(0)=0$,
$$
\int_{\br^6}G(f(t,x,v))\,\,dx\,dv=\int_{\br^6}G(f_0(x,v))\,\,dx\,dv.
$$

\subsection{Main result}
This work aims to prove a local uniqueness result for steady states of \eqref{gvp}.
In the paper \cite{Lem} (see also \cite{LMR,M}), the author proves quantitative stability inequalities for the gravitational Vlasov-Poisson system that will be crucial in the following.
More precisely, the author considers a class of steady states $\bar f$ to the Vlasov Poisson system, which are decreasing functions of their microscopic energy, and obtains an explicit control of the $L^1$ distance between $\bar f$ and any function $f$ in terms of the energy $\mathcal H(f)-\mathcal H(\bar f)$ and the $L^1$ distance between the rearrangements $\bar f^*$ and $f^*$ of $\bar f$ and $f$, respectively.

In the following, we give some definitions, and we state the local functional inequality in \cite[Theorem 2]{Lem}.
We first recall the notion of equimeasurability and rearrangement.
\begin{defi}
Given two integrable nonnegative functions $f,g:\br^n\to \br$, we say that $f$ and $g$ are equimeasurable
if 
$$
|\{f>s\}|=|\{g>s\}|\qquad \text{for a.e. $s>0$}.
$$
Then, the (radially decreasing) rearrangement $f^*$ of $f$ is defined as the unique radially decreasing function equimeasurable to $f$.
In other words, the level sets of $f^*$ are given by
$$
\{f^*>s\}=B_{r(s)},\qquad \text{with $r(s)>0$ s.t. }\,|B_{r(s)}|=|\{f>s\}|\text{ for a.e. $s>0$.}
$$
\end{defi}

The following important result is proved in \cite[Theorem 2(ii)]{Lem}.
\begin{thm}
\label{thm:lem}
Consider $\bar f$ a compactly supported steady state solution of \eqref{gvp} of the form
\begin{equation}
\label{eq:bar f lemou}
\bar f(x,v)=F(e(x,v)), \quad\text{with } e(x,v)=\frac{|v|^2}{2}+\phi_{\bar f}(x),
\end{equation}
where $F$ is a continuous function from $\br$ to $\br^+$ that satisfies the following monotonicity property: there exists $e_0<0$ such that $F(e)=0$ for $e\ge e_0$ and $F$ is a $C^1$ function on $(-\infty, e_0)$ with $F'<0$ on $(-\infty, e_0).$
Assume that $f\in L^1\cap L^\infty(\br^6)$ has finite kinetic energy and is sufficiently close to a translation of $\bar f$ in the following sense:
\be
\label{eq:close}
\inf_{z\in\br^3}\|\phi_f-\phi_{\bar f}(\cdot -z)\|_{L^\infty}+\|\nabla \phi_f-\nabla \phi_{\bar f}(\cdot -z)\|_{L^2}< R_0,
\ee
for some suitable constant $R_0>0$. 
Then there exists a constant $K_0>0$, depending only on $\bar f$, such that
$$
\inf_{x_0\in \br^3}\|f-\bar f(\cdot-(x_0,0))\|_{L^1}\le \|f^*-\bar f^*\|_{L^1}+K_0 \bigl[\mathcal{H}(f)-\mathcal{H}(\bar f)
+\|f^*-\bar f^*\|_{L^1}\bigr]^{1/2}.
$$
where we denote $\bar f(\cdot-(x_0,0))=\bar f(x-x_0, v)$.
\end{thm}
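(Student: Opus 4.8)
The plan is to interpolate between $f$ and $\bar f$ using the monotone rearrangement of $f$ in the microscopic energy $e$. Since \eqref{gvp} and all quantities in the statement are invariant under spatial translations, I would first choose a near-optimal shift $z$ in \eqref{eq:close} and replace $\bar f$ by $\bar f(\cdot-(z,0))$: this is harmless because the Hamiltonian \eqref{eq:H}, the rearrangement $\bar f^*$, and the potential (up to translation) are unchanged, and it reduces matters to bounding $\|f-\bar f\|_{L^1}$ under $\|\phi_f-\phi_{\bar f}\|_{L^\infty}+\|\nabla\phi_f-\nabla\phi_{\bar f}\|_{L^2}<R_0$. Write $h:=f-\bar f$ and $\phi_h:=\phi_f-\phi_{\bar f}$, and let $g$ be the unique nonincreasing function of $e(x,v)=\tfrac{|v|^2}{2}+\phi_{\bar f}(x)$ that is equimeasurable with $f$, so that $g^*=f^*$. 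The triangle inequality then gives
\begin{equation*}
\|f-\bar f\|_{L^1}\le \|f-g\|_{L^1}+\|g-\bar f\|_{L^1},
\end{equation*}
and I would estimate the two terms separately.

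The second term is exactly the first term on the right-hand side of the claim. Indeed, both $g=G(e)$ and $\bar f=F(e)$ are nonincreasing functions of $e$; writing $\nu$ for the push-forward of Lebesgue measure on $\br^6$ under $e$ and $\Phi(\tau)=\nu(\{e<\tau\})$, the radial and the energy rearrangements are intertwined by $\Phi$, and a change of variables yields
\begin{equation*}
\|g-\bar f\|_{L^1}=\int_{\br}|G-F|\,d\nu=\|f^*-\bar f^*\|_{L^1}.
\end{equation*}
For the first term I would expand the energy \eqref{eq:H} about $\bar f$. Using $\Delta\phi_f=\rho_f$, the symmetry of the Coulomb kernel, and $e=\tfrac{|v|^2}{2}+\phi_{\bar f}$, a direct computation gives the exact relation
\begin{equation*}
\mathcal H(f)-\mathcal H(\bar f)=\int_{\br^6}e\,(f-\bar f)\,dx\,dv-\tfrac12\|\nabla\phi_h\|_{L^2}^2 .
\end{equation*}
Inserting $g$, and recalling that $g$ minimizes $\int e\,(\cdot)$ among functions equimeasurable with $f$ (whence $\int e\,(f-g)\ge0$), I would rewrite
\begin{equation*}
\int_{\br^6}e\,(f-g)\,dx\,dv=\bigl[\mathcal H(f)-\mathcal H(\bar f)\bigr]+\tfrac12\|\nabla\phi_h\|_{L^2}^2-\int_{\br^6}e\,(g-\bar f)\,dx\,dv .
\end{equation*}

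The heart of the proof is a \emph{quantitative} rearrangement inequality: since $g$ is the energy-monotone rearrangement of $f$ and $\|g\|_{L^\infty}=\|f\|_{L^\infty}<\infty$, the rearrangement deficit controls the $L^1$ distance quadratically,
\begin{equation*}
\int_{\br^6}e\,(f-g)\,dx\,dv\ \ge\ c\,\|f-g\|_{L^1}^2,
\end{equation*}
for a constant $c>0$ depending only on $\bar f$. This is the step I expect to be the main obstacle: it is precisely where the $L^\infty$ bound is used, to prevent the mass by which $f$ and $g$ differ from being displaced across a vanishingly small energy gap, and it requires controlling the degeneracy of the density of states near the edge of the support and near the level $e_0$ (which is why $F'<0$ is assumed strict). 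The term $\int e\,(g-\bar f)$ is harmless from below: on $\{e\ge0\}$ one has $\bar f=0$ and $e\ge0$, so its contribution is nonnegative, while on the bounded region $\{e<0\}$ one has $|e|\le\|\phi_{\bar f}\|_{L^\infty}$, giving $\int e\,(g-\bar f)\ge-\|\phi_{\bar f}\|_{L^\infty}\|f^*-\bar f^*\|_{L^1}$. Finally, the sign-indefinite Coulomb term is absorbed using the smallness in \eqref{eq:close}: from $\tfrac12\|\nabla\phi_h\|_{L^2}^2=-\tfrac12\int\phi_h\rho_h\le\tfrac12\|\phi_h\|_{L^\infty}\|f-\bar f\|_{L^1}\le\tfrac12 R_0\|f-\bar f\|_{L^1}$ together with $\|f-\bar f\|_{L^1}\le\|f-g\|_{L^1}+\|f^*-\bar f^*\|_{L^1}$, the resulting multiple of $\|f-g\|_{L^1}$ can be absorbed into the coercive term provided $R_0$ is small. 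Making this absorption rigorous, in tandem with the quantitative rearrangement inequality, is the other delicate point of the argument.

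Collecting the three estimates, the identity for $\int e\,(f-g)$ becomes a closed inequality of the schematic form
\begin{equation*}
c\,\|f-g\|_{L^1}^2\ \le\ \bigl[\mathcal H(f)-\mathcal H(\bar f)\bigr]+C\,\|f^*-\bar f^*\|_{L^1}+C\,R_0\,\|f-g\|_{L^1}.
\end{equation*}
For $R_0$ small the last term is absorbed into the quadratic one, and solving the remaining quadratic inequality gives
\begin{equation*}
\|f-g\|_{L^1}\le K_0\bigl[\mathcal H(f)-\mathcal H(\bar f)+\|f^*-\bar f^*\|_{L^1}\bigr]^{1/2}.
\end{equation*}
Substituting into the triangle inequality, together with $\|g-\bar f\|_{L^1}=\|f^*-\bar f^*\|_{L^1}$ established above, produces precisely the asserted estimate; and since the whole argument was carried out after normalizing the optimal translation, the infimum over $x_0$ on the left-hand side is recovered.
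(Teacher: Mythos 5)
First, a point of order: the paper contains no proof of this statement --- Theorem \ref{thm:lem} is imported verbatim from \cite[Theorem 2(ii)]{Lem} --- so your attempt must be measured against Lemou's argument (which builds on \cite{LMR}). Your opening moves do match genuine ingredients of that argument: the monotone rearrangement $g$ of $f$ with respect to a microscopic energy, the bathtub principle $\int e\,(f-g)\,dx\,dv\ge 0$, the exact expansion $\mathcal{H}(f)-\mathcal{H}(\bar f)=\int e\,(f-\bar f)\,dx\,dv-\frac12\|\nabla\phi_h\|_{L^2}^2$ (with $\phi_h:=\phi_f-\phi_{\bar f}$), and the identity $\|g-\bar f\|_{L^1}=\|f^*-\bar f^*\|_{L^1}$, which is correct because both functions are nonincreasing in $e$, so all the relevant level sets are nested. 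The genuine gap is your treatment of the field term. In your closed inequality $c\,\|f-g\|_{L^1}^2\le[\mathcal{H}(f)-\mathcal{H}(\bar f)]+C\|f^*-\bar f^*\|_{L^1}+CR_0\|f-g\|_{L^1}$, the last term is \emph{linear} in $\|f-g\|_{L^1}$ with a fixed coefficient: $R_0$ is a given constant of the theorem, not a quantity tending to zero with the data. A linear term cannot be absorbed into a quadratic one precisely in the regime of interest where $\|f-g\|_{L^1}$ is small (absorption works only when $\|f-g\|_{L^1}\gtrsim R_0/c$); solving the quadratic inequality honestly yields $\|f-g\|_{L^1}\le CR_0+C\bigl[\mathcal{H}(f)-\mathcal{H}(\bar f)+\|f^*-\bar f^*\|_{L^1}\bigr]^{1/2}$, and using instead the bound $\|\nabla\phi_h\|_{L^2}<R_0$ directly from \eqref{eq:close} leaves an additive $R_0^2$ with the same defect. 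Either way you prove the statement only up to an additive constant of order $R_0$ that does not vanish with the right-hand side --- which in particular would destroy the corollary \eqref{eq:local control} on which the entire paper rests.

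The reason the actual proof escapes this is that it never rearranges with respect to the frozen energy $e_{\bar f}$: Lemou rearranges $f$ with respect to its \emph{own} energy $e_f=\frac{|v|^2}{2}+\phi_f$ and interpolates through a reduced functional $\phi\mapsto\mathcal{J}(\phi)$ (the infimum of the energy over the equimeasurability class at fixed potential), splitting $\mathcal{H}(f)-\mathcal{H}(\bar f)$ into a nonnegative rearrangement deficit --- to which a quantitative inequality of the type you postulate is applied --- plus the difference $\mathcal{J}(\phi_f)-\mathcal{J}(\phi_{\bar f})$. The crucial point, absent from your sketch, is that this second difference is bounded \emph{below} by $c\,\inf_z\|\nabla\phi_f-\nabla\phi_{\bar f}(\cdot-z)\|_{L^2}^2$ via an Antonov/H\"ormander-type coercivity estimate for the Hessian of $\mathcal{J}$ at $\phi_{\bar f}$: the field term you try to dominate brutally enters with a \emph{favorable} sign in the correct decomposition. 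That coercivity step is where $F'<0$ is really used (you attribute it instead to density-of-states control, which is not where it acts), where the infimum over translations is forced (translations span the kernel of the Hessian), and what the smallness $R_0$ in \eqref{eq:close} is actually for --- staying in the neighborhood where the Taylor expansion of $\mathcal{J}$ is valid. A minor further inconsistency: a two-shell swap test (exchange the values $a>b$ of $g$ on two energy shells of measure $m$ at levels $e_1<e_2$) shows your coercivity constant satisfies $c\lesssim (e_2-e_1)/\bigl(m(a-b)\bigr)$ and hence necessarily degrades with $\|f\|_{L^\infty}$, so it cannot depend only on $\bar f$ for general $f$; this is harmless in the Corollary, where $f$ is equimeasurable to $\bar f$, but it is not consistent with your claim as stated.
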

An immediate consequence is the following estimate, which will be the starting point of our investigation.
\begin{cor}
Let $f,\bar f$ be as in Theorem \ref{thm:lem}.
Assume in addition that $f$ is equimeasurable to $\bar f.$ Then
\begin{equation}\label{eq:local control}
\inf_{x_0\in\br^3}\|f-\bar f(\cdot-(x_0,0))\|_{L^1}^2\le K_0^2 [\mathcal{H}(f)-\mathcal{H}(\bar f)],
\end{equation}
\end{cor}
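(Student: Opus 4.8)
The plan is to observe that this corollary is essentially immediate from Theorem \ref{thm:lem}, the only new ingredient being the equimeasurability hypothesis. The key point is that equimeasurability of $f$ and $\bar f$ forces their rearrangements to coincide. Indeed, by definition the radially decreasing rearrangement of an integrable nonnegative function is the \emph{unique} radially decreasing function equimeasurable to it; since $f$ and $\bar f$ are equimeasurable, they share the same distribution function $s\mapsto |\{\,\cdot\,>s\}|$, and hence $f^*$ and $\bar f^*$ are two radially decreasing functions with identical level sets. By uniqueness they must agree, so $f^*=\bar f^*$ almost everywhere and therefore $\|f^*-\bar f^*\|_{L^1}=0$.

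With this, I would simply substitute $\|f^*-\bar f^*\|_{L^1}=0$ into the conclusion of Theorem \ref{thm:lem}. The first term on the right-hand side vanishes and the bracket in the square-root reduces to $\mathcal{H}(f)-\mathcal{H}(\bar f)$, yielding
\begin{equation*}
\inf_{x_0\in\br^3}\|f-\bar f(\cdot-(x_0,0))\|_{L^1}\le K_0\,\bigl[\mathcal{H}(f)-\mathcal{H}(\bar f)\bigr]^{1/2}.
\end{equation*}
Both sides are nonnegative, so squaring preserves the inequality and produces exactly \eqref{eq:local control}. One should note here that the quantity $\mathcal{H}(f)-\mathcal{H}(\bar f)$ is automatically nonnegative in this regime, consistent with the square-root being well defined: this is forced by the hypotheses of Theorem \ref{thm:lem} (the steady state $\bar f$ being an energy minimizer in its rearrangement class), so no additional argument is needed to make sense of the bound.

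There is no genuine obstacle in this step; the only thing to be careful about is that the two infima (in the theorem and in the corollary) are taken over the same translation parameter, so the substitution is literal and the passage to \eqref{eq:local control} is a direct consequence rather than requiring any reformulation. The substance of the estimate is entirely contained in Theorem \ref{thm:lem}; the corollary merely records the clean form the bound takes once the rearrangement term is eliminated, which is the inequality that will be iterated in the uniqueness argument to follow.
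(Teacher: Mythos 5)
Your proof is correct and matches the paper's intended argument: the paper states the corollary as an ``immediate consequence'' of Theorem \ref{thm:lem}, and the one-line deduction is exactly yours --- equimeasurability forces $f^*=\bar f^*$ by uniqueness of the radially decreasing rearrangement, so the term $\|f^*-\bar f^*\|_{L^1}$ vanishes and squaring the resulting inequality gives \eqref{eq:local control}. Your side remark that $\mathcal{H}(f)-\mathcal{H}(\bar f)\ge 0$ (since $\bar f$ minimizes the energy in its equimeasurability class) is consistent with the variational framework of \cite{Lem,LMR} and does not affect the substitution.
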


Let $\bar f$ be the stationary solution as above. Our goal is to understand if, nearby $\bar f$, other stationary solutions of \eqref{gvp} exist. Because stationary solutions of \eqref{gvp} correspond to critical points of $\mathcal H$ with respect to variations of $\bar f$ generated by Hamiltonian flows (see Lemma \ref{lem:stat} below), it makes sense to consider 
a ``neighborhood'' of $\bar f$ generated by flows of smooth Hamiltonians.
Noticing that $\bar f$ is supported in a ball $B_\rho\subset \br^3\times \br^3$ for some $\rho>0$ and we shall use the flow of the functions $H$ to move $\bar f$, it makes sense to consider Hamiltonians $H$ that are all supported inside $B_{2\rho}$.
Hence, one should consider these functions $H$ as the ``tangent space'' at $\bar f$ that will generate the admissible variations. 

Let us introduce the following notation:
$$
\bar f_s^H:=(\Phi_s^H)_\# 
\bar f=\bar f \circ \Phi_{-s}^H\qquad \forall \,s \in \br,
$$
where $s\mapsto \Phi_s^H$ is the Hamiltonian flow of $H$, namely
\begin{equation}
\label{phi}
\left\{ \begin{array}{ccc}\pt_s \Phi^H_s= J \grad H(\Phi^H_s), & J\in \br^{6\times 6}, \quad J=\(\begin{array}{cc}0& Id\\ -Id&0\end{array}\)\\
\Phi_0=Id.
\end{array} \right.
\end{equation}
In other words, $s\mapsto \bar f_s^H$ is the variation generated by $H$, and as $H$ varies, this generates a ``symplectic'' neighborhood of $\bar f$.
Note that, since $\bar f_s^H=\bar f_1^{sH}$, to parameterize a neighborhood of $\bar f$ it is enough to consider the image of the map\footnote{This resembles the exponential map in Riemannian geometry, where a neighborhood of a point $x\in M$ is obtained as the image of a neighborhood of $0$ in $T_xM$ via the map
$$
v\mapsto \gamma_v(1),
$$
where $s\mapsto\gamma_v(s)$ is the geodesic starting at $x$ with velocity $v$.}
\begin{equation}
\label{eq:exp H}
H\mapsto \bar f_1^H.
\end{equation}
We now introduce some definitions to clarify the hypotheses that are needed on the Hamiltonian $H.$
Let us start by defining the set ${\rm Inv}_{\bar f}$ that represents the set of all the Hamiltonians who act trivially on $\bar f$.
\begin{defi}
${\rm Inv}_{\bar f}:=\big\{H\in C^2(\br^6)\,:\, \{H,\bar f\}=0\big\}.$
\end{defi}
This definition is motivated by the following simple result:
\begin{lem}
\label{lem:inv}
If $H \in {\rm Inv}_{\bar f}$ then $(\Phi_t^H)_\#\bar f=\bar f$, i.e., the Hamiltonian flow of $H$ does not move $\bar f$.
\end{lem}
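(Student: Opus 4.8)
The claim is that if $\{H,\bar f\}=0$ then the Hamiltonian flow $\Phi_t^H$ leaves $\bar f$ invariant under pushforward, i.e. $\bar f\circ\Phi_{-t}^H=\bar f$ for all $t$. The plan is to show that $\bar f$ is constant along the trajectories of the Hamiltonian vector field $J\grad H$, which is exactly the statement that the Poisson bracket of $H$ and $\bar f$ vanishes. Let me first recall the definition of the Poisson bracket in the symplectic coordinates $(x,v)\in\br^3\times\br^3$ with the standard symplectic matrix $J$: we have $\{H,\bar f\}=\grad_x H\cdot\grad_v\bar f-\grad_v H\cdot\grad_x\bar f=(J\grad H)\cdot\grad\bar f$, where $\grad$ denotes the full gradient in the six variables $(x,v)$. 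The hypothesis $H\in{\rm Inv}_{\bar f}$ is precisely the vanishing of this expression.

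The key step is a direct computation of the time derivative of $s\mapsto\bar f\bigl(\Phi_s^H(y)\bigr)$ for a fixed phase-space point $y=(x,v)$. Using the flow equation \eqref{phi}, namely $\pt_s\Phi_s^H=J\grad H(\Phi_s^H)$, the chain rule gives
\begin{equation}
\frac{d}{ds}\bar f\bigl(\Phi_s^H(y)\bigr)=\grad\bar f\bigl(\Phi_s^H(y)\bigr)\cdot\pt_s\Phi_s^H(y)=\grad\bar f\bigl(\Phi_s^H(y)\bigr)\cdot J\grad H\bigl(\Phi_s^H(y)\bigr).
\end{equation}
By the identification of the right-hand side with the Poisson bracket evaluated at the point $\Phi_s^H(y)$, and since $\{H,\bar f\}\equiv 0$ everywhere by hypothesis, this derivative vanishes identically in $s$. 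Hence $s\mapsto\bar f\bigl(\Phi_s^H(y)\bigr)$ is constant, so $\bar f\bigl(\Phi_s^H(y)\bigr)=\bar f(y)$ for every $s$ and every $y$. Replacing $s$ by $-t$ and $y$ by $\Phi_t^H(w)$ yields $\bar f(w)=\bar f\bigl(\Phi_{-t}^H(w)\bigr)$, which is exactly the asserted identity $(\Phi_t^H)_\#\bar f=\bar f\circ\Phi_{-t}^H=\bar f$.

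The only point requiring a little care, rather than being a genuine obstacle, is the regularity: since $H\in C^{1,1}$ and $\bar f=F(e)$ with $F\in C^1$ on $(-\infty,e_0)$, the composition $\bar f\circ\Phi_s^H$ is differentiable in $s$ and the chain rule above is justified pointwise wherever $\bar f$ is differentiable. The flow $\Phi_s^H$ is well-defined and at least $C^1$ in $y$ because $J\grad H$ is Lipschitz, so the Cauchy--Lipschitz theorem applies. At the level set $\{e=e_0\}$, where $F$ may fail to be $C^1$, one can argue by continuity or simply note that $\bar f$ vanishes identically on $\{e\ge e_0\}$ so the statement is trivial there. I therefore expect no serious difficulty; the content of the lemma is essentially the defining property that the Hamiltonian flow transports a quantity that Poisson-commutes with $H$ without changing it, and the proof reduces to the one-line differentiation displayed above.
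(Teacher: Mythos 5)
Your proof is correct in substance but takes a genuinely different route from the paper's. You argue along characteristics (a Lagrangian viewpoint): differentiate $s\mapsto\bar f(\Phi_s^H(y))$ pointwise via the chain rule, identify the derivative with the Poisson bracket evaluated along the trajectory, and conclude constancy. The paper instead argues at the PDE level (an Eulerian viewpoint): it notes that $\bar f_s^H:=(\Phi_s^H)_\#\bar f$ solves the transport equation $\pt_s\bar f_s^H+\Div(J\grad H\,\bar f_s^H)=0$ with datum $\bar f$, that $\bar f$ itself is a stationary solution of the same equation because $\Div(J\grad H\,\bar f)=\{H,\bar f\}=0$ (using that $J\grad H$ is divergence-free), and then invokes uniqueness for this linear transport equation, valid since $J\grad H$ is Lipschitz. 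What the paper's argument buys is robustness at low regularity: it needs $\{H,\bar f\}=0$ only in a distributional/a.e.\ sense and never requires pointwise differentiability of $\bar f$, which matters here because $\bar f=F(e)$ need not be $C^1$ across the boundary $\{e=e_0\}$ of its support, and an a.e.\ condition does not automatically hold at every point of a single trajectory (a null set). Your argument buys elementarity and self-containedness, but precisely for that reason the step you wave at deserves one more line: trajectories of $\Phi_s^H$ need not preserve the set $\{e\ge e_0\}$, so ``the statement is trivial there'' is not quite enough when a trajectory crosses $\{e=e_0\}$. The fix is routine: $u(s):=\bar f(\Phi_s^H(y))$ is continuous, vanishes on the closed set of times where $e(\Phi_s^H(y))\ge e_0$ (since $F$ is continuous with $F(e)=0$ for $e\ge e_0$), and has vanishing derivative on each open interval of times where $e<e_0$ (where $\bar f$ is $C^1$ and the bracket, being continuous there and zero a.e., vanishes identically); piecing the intervals together by continuity gives $u\equiv u(0)$ in all cases. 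Finally, a harmless slip: with the paper's convention $\{H,\bar f\}=\grad\bar f\cdot J\grad H$ from \eqref{eq:g}, your displayed chain of identities $\{H,\bar f\}=\grad_xH\cdot\grad_v\bar f-\grad_vH\cdot\grad_x\bar f=(J\grad H)\cdot\grad\bar f$ is off by a sign in the middle expression, which is inconsequential since only the vanishing of the bracket is used.
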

\begin{proof}
The function $\bar f^H_s:=(\Phi_s^H)_\#\bar f$ solves the transport equation
$$
\partial_s\bar f^H_s+\Div(J\nabla H\bar f^H_s)=0,\qquad \bar f^H_s|_{s=0}=\bar f.
$$
Since $J\nabla H\in C^1$ and also $\bar f$ solves this equation (because $\Div(J\nabla H\bar f)=\{H,\bar f\}=0$), we obtain $\bar f_s^H\equiv \bar f$ by uniqueness of solutions to transport equations with $C^1$ vector-fields.
\end{proof}
The lemma above shows that 
if $H$ belongs to $Inv_{\bar f}$, then $\Phi_s^H$ is not moving $\bar f$.
Since our goal is to use Hamiltonians $H$ to parameterize a neighborhood of $\bar f$,  there is no reason to consider $H$ that belong to $Inv(\bar f)$, and it makes sense to exclude them.
Actually, for some technical reasons that will be clearer later, we must impose a quantitative version of the condition $H \not\in Inv_{\bar f}$. To do that, we introduce the family of sets
$$
\mathcal{A}_k:=\{H\in C^2(\br^6) \,:\, \|\grad H \|_{L^1}\le k \|\{H,\bar f\}\|_{L^1} \},\qquad k \geq 1.
$$
\begin{remark}
We note that 
${\rm Inv}_{\bar f}=\underset{k\in \bN}{\bigcap} \mathcal{A}_k^{c}.$
Indeed, if $H \in \underset{k\in \bN}{\bigcap} \mathcal{A}_k^{c}$ then
$\|\grad H \|_{L^1}/k\ge \|\{H,\bar f\}\|_{L^1}$ for all $k \in \bN$.
Thus $\{H,\bar f\}\equiv 0$, which implies that $H \in \rm{Inv}_{\bar f}$.
Viceversa, if $H \in \rm{Inv}_{\bar f}$ then clearly $H \in \underset{k\in \bN}{\bigcap} \mathcal{A}_k^{c}$.
\end{remark}
Because of this observation, we see that 
$$
H \not\in Inv_{\bar f}\qquad \Longleftrightarrow\qquad 
\exists \,k \text{ such that }H\in \mathcal{A}_k.
$$
Motivated by this fact, in the sequel, we shall fix $k$
and consider only Hamiltonians that belong to $\mathcal{A}_k$.
Of course, this is more restrictive than assuming only $H \not\in Inv_{\bar f}$, but at the moment it is not clear to us how to remove such an assumption.
\smallskip

Going further in our preliminary analysis, we observe that all translations of $\bar f$ are trivially stationary solutions. 
However, the kinetic energy automatically controls translations in $v$, and indeed, they do not appear in \eqref{eq:local control}.
To ``kill'' the space of translations in $x$, we will assume that ${\rm Bar}_x(\bar f_1^H)={\rm Bar}_x(\bar f),$ where 
\begin{equation}
\label{eq:bar}
{\rm Bar}_x(f):=\int_{\br^6} x\,f(x,v)\,dx\,dv \in \br^3
\end{equation}
denotes the ``barycenter in $x$'' of $f$.
We want to emphasize that this is not a restrictive assumption on $H$ since one could remove it 
by adding to $H$ a Hamiltonian corresponding to translations in the $x$ variable to recenter the barycenter of $f$. Since this would not add major technical difficulties to the proof but may distract the reader from the essential points, we prefer to impose a barycenter condition on $\bar f_1^H$.
\smallskip

As a final consideration, since our goal is to prove that there are no steady states of  \eqref{gvp} in a neighborhood of $\bar f$ generated via the map \eqref{eq:exp H},
we shall need to assume that our Hamiltonians $H$ are small in some suitable topology.
\smallskip

Our main theorem asserts that, for Hamiltonians small enough in a sufficiently strong Sobolev norm that are also quantitatively away from ${\rm Inv}_{\bar f}$,
there cannot be a stationary point of the form $\bar f_1^H$.

\begin{thm}
\label{thm:main}
Let $\bar f$ be as in \eqref{eq:bar f lemou},
where $F$ is a continuous function from $\br$ to $\br^+$ that satisfies the following monotonicity property: there exists $e_0<0$ such that $F(e)=0$ for $e\ge e_0$ and $F$ is a $C^1$ function on $(-\infty, e_0)$ with $F'<0$ on $(-\infty, e_0).$
Let $\rho>0$ be such that ${\rm supp}(\bar f)\subset B_\rho.$
Also, assume that $\bar f \in W^{2,q}(\br^6)$ for some $q>3$. Then the following local uniqueness result for steady states holds:

Let $r\geq 22$, and
given $\rho,\eps,k>0$ consider the space of functions
$$
\mathcal{N}_\eps^{k}:=\big\{\bar f_1^H
\,:\, {\rm supp}(H)\subset B_{2\rho},\, {\rm Bar}_x(\bar f_1^H)={\rm Bar}_x(\bar f),\, H \in \mathcal{A}_k,\,
\|H\|_{W^{r,2}}
\leq \eps\big\}.
$$
Then, fixed $k \in \bN$, there exists $\eps_k>0$ such that there is no stationary state of \eqref{gvp}
in $\mathcal{N}_{\eps_k}^{k}$ (except for $\bar f$).
\end{thm}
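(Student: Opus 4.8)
The plan is to test the functional inequality \eqref{eq:local control} on the candidate steady state $\bar f_1^H$ and to exploit that \emph{both} $\bar f$ and $\bar f_1^H$ are critical points of $\mathcal{H}$ along the variation generated by $H$. First I would note that, since $s\mapsto\Phi_s^H$ is the flow of the divergence-free field $J\grad H$, each $\Phi_s^H$ is measure preserving, so $\bar f_1^H=\bar f\circ\Phi_{-1}^H$ is equimeasurable to $\bar f$; for $H$ small in $W^{r,2}$ the closeness hypothesis \eqref{eq:close} also holds (via elliptic estimates for the potential), so the Corollary applies and gives
$$
\inf_{x_0\in\br^3}\|\bar f_1^H-\bar f(\cdot-(x_0,0))\|_{L^1}^2\le K_0^2\bigl[\mathcal{H}(\bar f_1^H)-\mathcal{H}(\bar f)\bigr].
$$
Next I set $g(s):=\mathcal{H}(\bar f_s^H)$. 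Using $\partial_s\bar f_s^H=\{\bar f_s^H,H\}$ and the standard computation $\frac{d}{ds}\mathcal{H}(\bar f_s^H)=\int e_{\bar f_s^H}\{\bar f_s^H,H\}=\int H\,\{e_{\bar f_s^H},\bar f_s^H\}$, I see that $g'(s)$ pairs $H$ with the Vlasov defect $\{e_{\bar f_s^H},\bar f_s^H\}$ of $\bar f_s^H$. Since $\bar f=F(e)$ is a steady state, $\{e,\bar f\}=F'(e)\{e,e\}=0$, so $g'(0)=0$; and if $\bar f_1^H$ is a steady state then its defect vanishes, giving $g'(1)=0$ as well.

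The heart of the argument is that these two critical-point conditions force the energy gap to be \emph{cubic} rather than quadratic in $H$. From $g'(0)=0$ one has $g(1)-g(0)=\int_0^1(1-\tau)g''(\tau)\,d\tau$, while $g'(1)=g'(0)=0$ gives $\int_0^1 g''(\tau)\,d\tau=0$. Subtracting half of the latter and writing $g''(\tau)=g''(0)+\int_0^\tau g'''(\sigma)\,d\sigma$ (the constant $g''(0)$ drops because $\int_0^1(\tfrac12-\tau)\,d\tau=0$) yields
$$
\mathcal{H}(\bar f_1^H)-\mathcal{H}(\bar f)=g(1)-g(0)=\int_0^1\Big(\tfrac12-\tau\Big)\int_0^\tau g'''(\sigma)\,d\sigma\,d\tau,
$$
so the whole gap is controlled by $\sup_{s\in[0,1]}|g'''(s)|$. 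I would then establish the trilinear bound
$$
\sup_{s\in[0,1]}|g'''(s)|\le C\,\|H\|_{W^{r,2}}\,\|\{H,\bar f\}\|_{L^1}^2,
$$
in which two of the three $s$-derivatives (each essentially an extra Poisson bracket with $H$, and at leading order producing the factor $\partial_s\bar f_s^H\approx\{\bar f,H\}=-\{H,\bar f\}$) are packaged into the weak quantity $\|\{H,\bar f\}\|_{L^1}^2$, while the third is absorbed by the strong norm $\|H\|_{W^{r,2}}\le\eps$.

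It remains to bound the left-hand side of the inequality from below. For small $H$ the leading part of $\bar f_1^H-\bar f$ is $\{\bar f,H\}=-\{H,\bar f\}$, and the assumptions $H\in A_k$ together with $Bar_x(\bar f_1^H)=Bar_x(\bar f)$ rule out the degenerate situation in which $\{H,\bar f\}$ is essentially the infinitesimal generator of an $x$-translation of $\bar f$; hence, after controlling the nonlinear remainder in $L^1$ and the infimum over $x_0$,
$$
\inf_{x_0\in\br^3}\|\bar f_1^H-\bar f(\cdot-(x_0,0))\|_{L^1}^2\ge c\,\|\{H,\bar f\}\|_{L^1}^2 .
$$
Chaining the three displays produces $c\,\|\{H,\bar f\}\|_{L^1}^2\le K_0^2\,C\,\eps\,\|\{H,\bar f\}\|_{L^1}^2$. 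Since $H\in A_k$ is not identically zero, $\{H,\bar f\}\not\equiv0$ (otherwise $\|\grad H\|_{L^1}\le k\|\{H,\bar f\}\|_{L^1}=0$ would force $H$ constant, hence $H\equiv0$ by the support condition), so dividing by $\|\{H,\bar f\}\|_{L^1}^2>0$ gives $c\le K_0^2 C\eps$, which fails once $\eps<c/(K_0^2C)$. This contradiction shows $N_\eps^{k}$ contains no steady state for $\eps$ small.

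The main obstacle is the estimate on $g'''$. Differentiating $s\mapsto\mathcal{H}(\bar f_s^H)$ three times brings in third-order brackets $\{\{\{\bar f_s^H,H\},H\},H\}$, i.e.\ high derivatives of the flow $\Phi_s^H$, of the transported density, and — through the nonlocal Poisson coupling $\phi_{\bar f_s^H}$ — of the potential. One must integrate by parts within the Poisson-bracket structure to transfer derivatives onto $H$ (where they are controlled by $\|H\|_{W^{r,2}}$ through Sobolev embedding) while leaving exactly two factors as low-regularity copies of $\{H,\bar f\}$, matched against the $L^1$ coercivity coming from \eqref{eq:local control}; this norm bookkeeping, together with the elliptic regularity needed for $\phi_{\bar f_s^H}$, is precisely what consumes the hypotheses $r\ge22$ and $\bar f\in W^{2,q}(\br^6)$ with $q>3$. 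A secondary technical point is making the lower bound quantitative: estimating $\|\bar f_1^H-\bar f-\{\bar f,H\}\|_{L^1}$ and handling the translation infimum via the barycenter normalization and the $A_k$ condition.
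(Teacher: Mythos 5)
Your skeleton coincides with the paper's: the stability inequality \eqref{eq:local control} with the barycenter normalization, the observation that $s\mapsto\mathcal H(\bar f_s^H)$ has vanishing first derivative at \emph{both} $s=0$ and $s=1$ (the paper's Lemma \ref{lem:stat}), and the Taylor trick producing $\int_0^1(\tfrac12-s)\bigl(g''(s)-g''(0)\bigr)\,ds$, which forces the energy gap to be cubic rather than quadratic. However, the two quantitative claims on which your contradiction rests are not justified, and the first is false as stated. The trilinear bound $\sup_s|g'''(s)|\le C\|H\|_{W^{r,2}}\|\{H,\bar f\}\|_{L^1}^2$ cannot hold: differentiating the second variation \eqref{eq:second_var} in $s$ produces, already from the kinetic term, a contribution of the form
$$
\int_{\br^6} v\cdot\grad_x H(x,v)\,\bigl(\grad g\cdot J\grad H\bigr)(\Phi_{-s}(x,v))\,dx\,dv,
$$
which contains a \emph{single} factor of $g$-type, and moreover through $\grad g=\grad^2H\cdot J\grad\bar f+\grad H\cdot J\grad^2\bar f$ it is controlled only by strong norms such as $\|\grad H\|_{L^\infty}^2\bigl(\|\grad H\|_{L^\infty}+\|\grad^2H\|_{L^\infty}\bigr)$, not by $\|\{H,\bar f\}\|_{L^1}^2$. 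Since $\|\{H,\bar f\}\|_{L^1}$ can be arbitrarily small relative to any norm of $\grad H$ (this degeneracy is exactly what the set $A_k$ quantifies), no such factorization is available. A structural sanity check confirms this: if your trilinear bound were true, your final chain $c\,\|\{H,\bar f\}\|_{L^1}^2\le K_0^2C\eps\|\{H,\bar f\}\|_{L^1}^2$ would use $A_k$ only to ensure $\{H,\bar f\}\not\equiv0$, i.e.\ only the qualitative condition $H\notin{\rm Inv}_{\bar f}$; the paper emphasizes that the quantitative membership $H\in A_k$ is the essential and apparently unremovable hypothesis, precisely because the cubic upper bound lives in strong norms while the coercivity lives in $L^1$.

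The same mismatch undermines your lower bound $\inf_{x_0}\|\bar f_1^H-\bar f(\cdot-(x_0,0))\|_{L^1}\ge c\,\|\{H,\bar f\}\|_{L^1}$ with $c$ independent of $H$: the linearization error $\|\bar f_1^H-\bar f+\{H,\bar f\}\|_{L^1}$ is quadratic in $\|\grad H\|_{L^\infty},\|\grad^2H\|_{L^\infty}$ (the paper's Lemma \ref{lem:L1 g}), and absorbing a quadratic strong-norm remainder into a linear weak-norm main term is not a technicality --- it is the crux. The paper resolves it by proving only the strong-norm cubic bound (Proposition \ref{prop:upper}), converting $\|g\|_{L^1}$ into $\|\grad H\|_{L^1}$ via $H\in A_k$, and then bridging the $L^1$, $L^2$ and $L^\infty$ norms of $\grad H$ through Nash's inequality \eqref{eq:Nash}, the Sobolev embedding \eqref{eq:Sobolev}, and the $L^2$ interpolation of Lemma \ref{lem:interp} under the smallness $\|H\|_{W^{r,2}}\le\eps$; the exponent bookkeeping (with $n=6$, $s=4$) is exactly what consumes $r\ge22$ and yields $1\le Ck\,\eps^{1/6}$. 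Your proposal never performs this interpolation step and invokes $r\ge22$ only heuristically, so as written the contradiction does not close. To repair it, replace the trilinear claim by the strong-norm cubic bound and carry out the Nash--Sobolev--interpolation chain.
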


\subsubsection{Comments}
Starting from the seminal paper of Arnold about the geometric interpretation of the Euler equations as $L^2$-geodesics in the space of measure preserving diffeomorphisms \cite{Arn2}, Choffrut and \v{S}ver\'ak obtained
a related result for the 2D Euler equation  \cite{CS}.
The basic idea there is that, under the evolution given by the 2D incompressible Euler equations,
the vorticity is transported by an incompressible vector field, hence the measure of all its super-level sets is constant.
This means that, given an initial vorticity $\omega_0$, its evolution $\omega(t)$ is in the same equimeasurability class of $\omega_0$.
This allows one to foliate the space of vorticities into a family of leaves $\mathcal O_{\omega_0}$ (the equimeasurability class of $\omega_0$), and the Euler equations preserve these leaves.
In addition, thanks to the Hamiltonian structure of the Euler equations, one can characterize stationary solutions as critical points of the Hamiltonian energy $\mathcal E$ restricted to the orbits. 

In other words, one has the following situation: the space of vorticities is foliated by the orbits $\mathcal O_{\omega}$,
and the equilibria are the critical points of $\mathcal E$ restricted to the orbits.
In finite dimension, the implicit function theorem would give the following:
if $\mathcal O_\omega$ is smooth near a point  $\bar \omega\in \mathcal O_\omega$,
and if $\bar\omega$ is a non-degenerate critical point of $\mathcal E$ in $\mathcal O_\omega$,
then near $\bar \omega$, the set of equilibria form a smooth manifold
transversal to the foliation. In addition, the dimension of this manifold is equal to the co-dimension of the orbits.
In particular, in a non-degenerate situation, the equilibria are locally in one-to-one correspondence
with the orbits.
In \cite{CS}, the authors obtain an analog of this correspondence in the infinite-dimensional context of
Euler equations. There, the authors use an infinite dimensional version of the implicit function theorem in the space of $C^\infty$ function via a Nash-Moser's interation.

With respect to their result, here we have different assumptions and results.
These are motivated by the following: 

\begin{itemize}
\item Since the Vlasov-Poisson system \eqref{gvp} is Hamiltonian, given an initial condition $f_0$, its evolution $f_t$ under the Vlasov-Poisson system will also be in the same equimesurability class. However, while Hamiltonian maps and measure preserving maps coincide in 2-dimension, they are very different in higher dimensions (for instance, Hamiltonian maps preserve the symplectic structure). Because solutions to the $3$D Vlasov-Poisson systems describe a Hamiltonian evolution of particles in the phase-space $\mathbb R^3\times\mathbb R^3$, there is no natural reason in this context why there should be only one stationary state in the same equimeasurability class. In particular, as already observed before,
stationary solutions of \eqref{gvp} correspond to critical points of $\mathcal H$ with respect to variations of $\bar f$ generated by Hamiltonian flows, and not with respect to arbitrary measure preserving variations.
This is why we need to look at functions $f$ that can be connected to $\bar f$ via a Hamiltonian flow, namely $f=\bar f_1^H$ for some $H$.
\item The smallness assumption on $\|\nabla^r H\|_{L^2}$ is natural, and actually weaker than the one in \cite{CS}, since smallness there is measured in the $C^\infty$ topology.
We note that we did not try to optimize the value of $r$, which we chose to be equal to $22$ for our proof to work, but that can can probably be optimized further. However, improving this number is not the emphasis here.
In addition, as explained before, the assumption of compact support of $H$ in the definition of $\mathcal{N}_\eps^{k}$ is not restrictive, since $\bar f_1^H$ does not depend on the behaviour of $H$ outside a large ball. 
\item As mentioned before, the assumption on ${\rm Bar}_x({\bar f_1^H})$ is not fundamental: one could remove it by replacing it with $H-H_0$, where $H_0$ corresponds to a translation in the phase space (multiplied by a suitable cut-off function, to make it compactly supported). What is more essential is our assumption $H \in \mathcal{A}_k$, that one would like to replace with $H \not\in Inv_{\bar f}$.
Unfortunately, it is unclear to us how to remove it. 
\end{itemize}

The goal of the next section is to prove our main theorem.

\section{Proof of the Theorem \ref{thm:main}}
\subsection{Strategy of the proof}
The idea of the proof is the following:
first, by exploiting the results in \cite{Lem}, in Lemma~\ref{lem:bar}
we prove that if $\bar f_1^H$
has the same barycenter as $\bar f$, then
$$
\|\bar f^H_1-\bar f\|_{L^1}^2\le C [\mathcal{H}(\bar f^H_1)-\mathcal{H}(\bar f)].
$$
Secondly, in Proposition~\ref{prop:upper} we show that if $\bar f_1^H$ is stationary, then
$$
\mathcal{H}(\bar f^H_1)-\mathcal{H}(\bar f) \leq C\|\nabla H\|_X^3
$$
for some suitable norm $\|\cdot \|_X$ of $\nabla H$.
Combining these two estimates, we get
\begin{equation}
\label{eq:32}
\|\bar f^H_1-\bar f\|_{L^1} \leq C\|\nabla H\|_X^{3/2}.
\end{equation}
As a next step, in Lemma~\ref{lem:L1 g} we prove that
$$
\|\bar f^H_1-\bar f\|_{L^1}=\|\{H,\bar f\}\|_{L^1}+O\big(\|\nabla H\|_X^2\big)=\|\nabla H\cdot J\nabla \bar f\|_{L^1}+O\big(\|\nabla H\|_X^2\big),
$$
that combined with \eqref{eq:32} and our quantitative assumption on the fact that $H$ does not belong to Inv$_{\bar f}$ (namely, $H \in \mathcal{A}_k$) allows us to prove that
$$
\|\nabla H\|_{L^1} \leq C\|\nabla H\|_X^{3/2}.
$$
Finally, exploiting the smallness $\|\nabla^r H\|_{L^2}
\leq \eps$ and interpolation estimates, we relate the two norms above and conclude that
$$
\|\nabla H\|_{L^2} \leq C\|\nabla H\|_{L^2}^{1+\delta}
$$
for some $\delta>0$. This yields a contradiction when $\|\nabla H\|_{L^2}$ is small enough, concluding the proof.

In the following sections, we provide all the details of the argument outlined above.

\subsection{Lower bound}\label{sec:lower}
%

Here and in the sequel, ${\rm Bar}_x$ denotes the spacial barycenter as defined in \eqref{eq:bar}.

\begin{lem}
\label{lem:bar}
Let $\bar f$ be as in \eqref{eq:bar f lemou},
where $F$ is a continuous function from $\br$ to $\br^+$ that satisfies the following monotonicity property: there exists $e_0<0$ such that $F(e)=0$ for $e\ge e_0$ and $F$ is a $C^1$ function on $(-\infty, e_0)$ with $F'<0$ on $(-\infty, e_0).$ 

Let $H \in C^{2}(\br^6)$
with  $\|\nabla H\|_{L^\infty}+\|\nabla^2 H\|_{L^\infty}\leq \eta$, and
consider the function $\bar f^H_1:=(\Phi^H_1)_\#\bar f$. Also, assume that 
\be\label{eq:baricentro}
{\rm Bar}_x(\bar f)={\rm Bar}_x(\bar f^H_1).
\ee
Assume that $\eta$ is small enough so that \eqref{eq:close} holds. Then
\be
\label{eq:lemou}
\|\bar f^H_1-\bar f\|_{L^1}^2\le C [\mathcal{H}(\bar f^H_1)-\mathcal{H}(\bar f)],
\ee
where $C$ depends on the diameter of the support of $\bar f$ and $\bar f_1^H$.
\end{lem}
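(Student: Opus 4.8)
The plan is to derive the quantitative estimate \eqref{eq:lemou} directly from the Corollary following Theorem \ref{thm:lem}, i.e. from the ``local control'' inequality \eqref{eq:local control}, exploiting the barycenter hypothesis \eqref{eq:baricentro} to discard the infimum over translations. Before the Corollary can be applied, two preliminary facts must be checked: that $\bar f^H_1$ is equimeasurable to $\bar f$, and that $\bar f^H_1$ satisfies the closeness requirement \eqref{eq:close}. Once these hold, the whole lemma reduces to comparing $\|\bar f^H_1-\bar f\|_{L^1}$ with the translation-infimum on the left of \eqref{eq:local control}.

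For equimeasurability, I would observe that $\Phi^H_1$ is the time-one map of the Hamiltonian flow \eqref{phi}, hence a symplectomorphism and in particular volume-preserving on $\br^6$. Since $\bar f^H_1=\bar f\circ\Phi^H_{-1}$, the super-level sets $\{\bar f^H_1>s\}$ are the $\Phi^H_1$-images of $\{\bar f>s\}$ and therefore have the same Lebesgue measure. Thus $\bar f^H_1$ and $\bar f$ are equimeasurable, so $(\bar f^H_1)^*=\bar f^*$ and the rearrangement term drops out. For the closeness condition, I would use the smallness $\|\nabla H\|_{L^\infty}+\|\nabla^2 H\|_{L^\infty}\le\eta$ to control the flow: a Gronwall estimate on \eqref{phi} yields $\|\Phi^H_1-\Id\|_{C^1}\le C\eta$ for $\eta$ small, so that $\bar f^H_1=\bar f\circ\Phi^H_{-1}$ is a small $C^1$-perturbation of $\bar f$ in $L^1$ (using $\bar f\in W^{2,q}\subset W^{1,1}$ on its compact support). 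Since $\bar f^H_1$ and $\bar f$ share the same $L^\infty$ bound (by equimeasurability) and have uniformly bounded supports, interpolation gives smallness of $\|\rho_{\bar f^H_1}-\rho_{\bar f}\|_{L^1\cap L^\infty}$, and standard Newtonian potential estimates then bound $\|\phi_{\bar f^H_1}-\phi_{\bar f}\|_{L^\infty}+\|\nabla\phi_{\bar f^H_1}-\nabla\phi_{\bar f}\|_{L^2}$ (taking $z=0$ in \eqref{eq:close}) by a constant times this density difference. Choosing $\eta$ small enough makes this smaller than $R_0$, so the Corollary applies and gives $\inf_{x_0}\|\bar f^H_1-\bar f(\cdot-(x_0,0))\|_{L^1}^2\le K_0^2[\mathcal{H}(\bar f^H_1)-\mathcal{H}(\bar f)]$.

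The remaining task—removing the infimum over $x_0$—is where \eqref{eq:baricentro} is used, and I expect it to be the crux of the argument. Since $\int\bar f=1$, translating gives $Bar_x(\bar f(\cdot-(x_0,0)))=Bar_x(\bar f)+x_0$, while $Bar_x(\bar f^H_1)=Bar_x(\bar f)$ by hypothesis; hence $\int x\,(\bar f^H_1-\bar f(\cdot-(x_0,0)))\,dx\,dv=-x_0$. As both functions are supported in a common bounded region (of radius $R$ controlled by the diameters of the supports of $\bar f$ and $\bar f^H_1$), this forces
$$
|x_0| = \left| \int_{\br^6} x\,\big(\bar f^H_1 - \bar f(\cdot-(x_0,0))\big)\,dx\,dv \right| \le R\,\|\bar f^H_1 - \bar f(\cdot-(x_0,0))\|_{L^1}.
$$
Taking $x_0=x_0^*$ a near-minimizer of the infimum (which is attained near $0$, since the distance jumps to $2\|\bar f\|_{L^1}$ once supports become disjoint), we obtain $|x_0^*|\le R\,\inf_{x_0}\|\cdots\|_{L^1}$.

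Finally, a triangle inequality together with the Lipschitz continuity of $x$-translations in $L^1$ (again valid since $\bar f\in W^{1,1}$ with compact support) gives
$$
\|\bar f^H_1-\bar f\|_{L^1} \le \inf_{x_0}\|\bar f^H_1-\bar f(\cdot-(x_0,0))\|_{L^1} + |x_0^*|\,\|\nabla_x\bar f\|_{L^1} \le C\,\inf_{x_0}\|\bar f^H_1-\bar f(\cdot-(x_0,0))\|_{L^1},
$$
with $C=1+R\,\|\nabla_x\bar f\|_{L^1}$. Squaring and combining with the Corollary yields \eqref{eq:lemou} with $\hat K_0=C^2K_0^2$, which depends only on $K_0$ and on the diameters of the supports. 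The genuinely delicate points are the quantitative verification of \eqref{eq:close} in the precise norms required (the flow-to-potential transfer) and the barycenter trick above; the rest is bookkeeping.
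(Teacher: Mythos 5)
Your proposal is correct and follows essentially the same route as the paper: apply the corollary of Theorem \ref{thm:lem} (equimeasurability holding since $\Phi^H_1$ is measure preserving, and \eqref{eq:close} holding for $\eta$ small), use the barycenter hypothesis to bound the optimal translation $|x_0|$ by $C\,[\mathcal{H}(\bar f^H_1)-\mathcal{H}(\bar f)]^{1/2}$ via the compact supports, and conclude by the triangle inequality with $\|\bar f-\bar f(\cdot-(x_0,0))\|_{L^1}\le |x_0|\,\|\nabla\bar f\|_{L^1}$. Your write-up is in fact slightly more careful than the paper's on the points it glosses over (the explicit equimeasurability check, the Gronwall/potential-theory verification of \eqref{eq:close}, and using a near-minimizer rather than assuming the infimum is attained).
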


\begin{proof}
Let $(x_0,0)$ be the point where the minimum is achieved in \eqref{eq:local control}. 
By definition,
\begin{align*}
{\rm Bar}_x(\bar f^H_1(\cdot-(x_0,0)))&=\int_{\br^6}x\bar f^H_1(x-x_0,v)\,dx\,dv\\
&=\int_{\br^6}(x-x_0)\bar f^H_1(x-x_0,v)\,dx\,dv+x_0={\rm Bar}_x(\bar f^H_1)+x_0.
\end{align*}
hence, thanks to \eqref{eq:baricentro},
\begin{align*}
|x_0|&=|{\rm Bar}_x(\bar f^H_1)-{\rm Bar}_x(\bar f^H_1(\cdot-(x_0,0)))|=|{\rm Bar}_x(\bar f)-{\rm Bar}_x(\bar f^H_1(\cdot-(x_0,0)))|\\
&\le\int_{\br^6}|x|\,|\bar f-\bar f^H_1(\cdot-(x_0,v_0))|\,dx\,dz\\
&\le C\|\bar f-\bar f^H_1(\cdot-(x_0,v_0))\|_{L^1}\le C[\mathcal{H}(\bar f^H_1)-\mathcal{H}(\bar f)]^{1/2},
\end{align*}
where we used that $\bar f$ and $\bar f^H_1$ are compactly supported, so $|(x,v)|$ is bounded 
on the support of $\bar f$ and $\bar f^H_1(\cdot-(x_0,0))$.
Thus, using \eqref{eq:local control} we obtain
\begin{align*}
\|\bar f-f^H_1\|_{L^1}&\le \|\bar f-\bar f(\,\cdot+(x_0,0)))\|_{L^1} +  \|\bar f(\,\cdot+(x_0,0)))-\bar f^H_1\|_{L^1}\\
&\le|x_0|\,\|\grad \bar f\|_{L^1} +K_0[\mathcal{H}(\bar f^H_1)-\mathcal{H}(\bar f)]^{1/2}\le C[\mathcal{H}(\bar f^H_1)-\mathcal{H}(\bar f)]^{1/2},
\end{align*}
which concludes the proof.
\end{proof}

%
\subsection{Upper bound}\label{sec:upper}
The aim of this section is to provide an estimate of the difference between the energy of $\bar f$ and of $\bar f_1^H$ in terms $H$,
under the additional assumption that 
$\bar f_1^H$ is a stationary solution for \eqref{gvp}. More precisely, we prove the following:

\begin{prop}
\label{prop:upper}
Let $\bar f$ be a compactly supported steady state such that $\bar f\in L^\infty(\br^6)$,
and that $\bar f\in {W^{2,q}(\br^6)}$ for some $q>3$.
Let $H \in C^2(\br^6)$.
Also, assume that $\bar f_1^H=(\Phi_1^H)_\#\bar f$ is a stationary solution for \eqref{gvp}. Then the following estimate holds:
$$
|\mathcal{H}(f_1^H)-\mathcal{H}(\bar f)|\le C\|\nabla H\|_{L^\infty}\Bigl(\|\nabla H\|_{L^\infty}+\|\nabla^2 H\|_{L^\infty}\Bigr)^2,
$$
where $C$ is a constant depending only on $\bar f$.
\end{prop}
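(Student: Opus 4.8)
The plan is to reduce the estimate to a bound on the third derivative of the scalar function $g(s):=\mathcal{H}(\bar f_s^H)$, $s\in[0,1]$, exploiting that $g$ has vanishing first derivative at \emph{both} endpoints. Two structural facts drive everything. First, $\mathcal{H}$ is a quadratic functional: $\mathcal{H}(f)=L(f)+Q(f,f)$ with linear part $L(f)=\tfrac12\int|v|^2f$ and symmetric bilinear part $Q(f,h)=\tfrac12\int\phi_f\rho_h$, so that $\tfrac{\delta\mathcal H}{\delta f}=e_f=\tfrac{|v|^2}{2}+\phi_f$. Second, by Lemma \ref{lem:inv} the flow satisfies $\partial_s\bar f_s^H=-\{H,\bar f_s^H\}$, hence, writing $\mathcal{L}_H:=\{H,\cdot\}$, one has $\partial_s^n\bar f_s^H=(-\mathcal{L}_H)^n\bar f_s^H$; moreover $\mathcal{L}_H$ is antisymmetric for the $L^2$ pairing, $\int a\,\mathcal{L}_Hb=-\int b\,\mathcal{L}_Ha$, because $J\nabla H$ is divergence free and compactly supported. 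These identities also make $g$ of class $C^3$ on $[0,1]$, once the estimates below show the relevant integrals are finite.

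First I would show $g'(0)=g'(1)=0$. Using the first variation,
$$
g'(s)=\int e_{\bar f_s^H}\,\partial_s\bar f_s^H=-\int e_{\bar f_s^H}\{H,\bar f_s^H\}=\int H\,\{e_{\bar f_s^H},\bar f_s^H\},
$$
where the last step is the cyclic property of the Poisson bracket (a consequence of the antisymmetry of $\mathcal{L}_H$). A function is a steady state of \eqref{gvp} exactly when $\{e_f,f\}=0$ (the Vlasov operator $v\cdot\nabla_x-\nabla\phi_f\cdot\nabla_v$ is the Hamiltonian flow of $e_f$); thus $g'(0)=0$ because $\bar f=F(e_{\bar f})$ is stationary, and $g'(1)=0$ by the standing hypothesis that $\bar f_1^H$ is stationary. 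This is the only place the stationarity of $\bar f_1^H$ is used, and it is precisely the ``critical point'' characterization of equilibria.

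Next I would use both boundary conditions. Since $g'(0)=g'(1)=0$, an elementary Green's function identity on $[0,1]$ gives the exact formula
$$
g(1)-g(0)=-\tfrac12\int_0^1 s(1-s)\,g'''(s)\,ds,\qquad\text{so that}\qquad|\mathcal{H}(\bar f_1^H)-\mathcal{H}(\bar f)|\le\tfrac1{12}\sup_{[0,1]}|g'''|.
$$
The gain is essential: with only $g'(0)=0$, Taylor's formula would retain the term $\tfrac12g''(0)$, which is genuinely of order $\|\nabla H\|^2$; it is the second condition $g'(1)=0$ that upgrades the bound to third order. It then remains to bound $g'''$. Differentiating $g=L(\bar f_s^H)+Q(\bar f_s^H,\bar f_s^H)$ three times,
$$
g'''(s)=L(\partial_s^3\bar f_s^H)+6\,Q(\partial_s\bar f_s^H,\partial_s^2\bar f_s^H)+2\,Q(\bar f_s^H,\partial_s^3\bar f_s^H).
$$
Naively $\partial_s^3\bar f_s^H=-\mathcal{L}_H^3\bar f_s^H$ carries three derivatives of $\bar f_s^H$, which exceeds the available regularity and would force an uncontrolled $\nabla^3 H$ through the flow map. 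The remedy is to integrate by parts with the antisymmetry of $\mathcal{L}_H$ so that at most one derivative ever lands on $\bar f_s^H$, the others falling on the smooth weights $|v|^2$, $\phi$, or on $H$ (yielding $\nabla H$ and $\nabla^2 H$). For instance, moving two brackets onto $|v|^2$,
$$
L(\partial_s^3\bar f_s^H)=-\tfrac12\int(\mathcal{L}_H^2|v|^2)\,(\mathcal{L}_H\bar f_s^H),
$$
with $\|\mathcal{L}_H^2|v|^2\|_{L^\infty}\lesssim\|\nabla H\|_{L^\infty}(\|\nabla H\|_{L^\infty}+\|\nabla^2 H\|_{L^\infty})$ on the bounded support and $\|\mathcal{L}_H\bar f_s^H\|_{L^1}\lesssim\|\nabla H\|_{L^\infty}\|\nabla\bar f_s^H\|_{L^1}$; the two potential terms are treated the same way, moving one bracket onto $\phi_{\partial_s\bar f_s^H}$ resp.\ $\phi_{\bar f_s^H}$ and invoking the Newtonian estimate $\|\nabla\phi_\rho\|_{L^\infty}\lesssim\|\rho\|_{L^1\cap L^q}$ (valid precisely because $q>3$) together with the Calderón--Zygmund bound $\|\nabla^2\phi_\rho\|_{L^q}\lesssim\|\rho\|_{L^q}$. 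Throughout, $\|\nabla\bar f_s^H\|_{L^q}$ is controlled uniformly for $s\in[0,1]$ by $\|\bar f\|_{W^{1,q}}$ via a Grönwall estimate on $\partial_s\nabla\bar f_s^H=-\{H,\nabla\bar f_s^H\}-\{\nabla H,\bar f_s^H\}$, at the cost of a factor $e^{C\|\nabla^2 H\|_{L^\infty}}$. Collecting the three contributions produces the claimed factor $\|\nabla H\|_{L^\infty}(\|\nabla H\|_{L^\infty}+\|\nabla^2 H\|_{L^\infty})^2$.

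The main obstacle is exactly the bookkeeping in the last step: one must verify that each of the three $H$-factors in $g'''$ can be realized with at most two derivatives on $H$ and at most one on $\bar f_s^H$, uniformly in $s\in[0,1]$, so that no $\nabla^3 H$ survives and all integrals are finite. The hypotheses $\bar f\in W^{2,q}$ with $q>3$ and the compact supports of $\bar f$ and $H$ (so that $|v|$ is bounded on every support that occurs and the integrations by parts produce no boundary terms) are what keep these factors under control.
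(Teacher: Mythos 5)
Your proposal is correct in substance and shares the paper's central mechanism: stationarity of both $\bar f$ and $\bar f_1^H$ kills the first variation of $g(s)=\mathcal H(\bar f_s^H)$ at \emph{both} endpoints (your cyclic-bracket identity $g'(s)=\int H\,\{e_{\bar f_s^H},\bar f_s^H\}$ is a compressed form of Lemma \ref{lem:stat}, applied at $s=0$ and, via the group property of the flow, at $s=1$), and the double-endpoint criticality is what upgrades the Taylor remainder from second to third order. Where you genuinely diverge is in extracting the third-order smallness. The paper never writes $g'''$: it uses \eqref{eq:taylor}, i.e. $\mathcal{H}(\bar f_1^H)-\mathcal{H}(\bar f)=\frac12\int_0^1(1-2s)\bigl(g''(s)-g''(0)\bigr)\,ds$, and bounds the increment $g''(s)-g''(0)$ by the fundamental theorem of calculus along the flow plus measure preservation; your kernel identity $g(1)-g(0)=-\frac12\int_0^1 s(1-s)\,g'''(s)\,ds$ is exactly one further integration by parts of that formula. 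Your route, through the quadratic splitting $\mathcal H=L+Q$, the iteration $\partial_s^n\bar f_s^H=(-1)^n\{H,\cdot\}^n\bar f_s^H$, and the antisymmetry of $\{H,\cdot\}$, is algebraically cleaner, but it costs two things the paper's version avoids. First, $g\in C^3$ is not free: $\{H,\cdot\}^3\bar f_s^H$ does not exist in $L^1$ when $\bar f\in W^{2,q}$ only, so you must define $g'''$ by differentiating the \emph{post}-integration-by-parts expression for $g''$ (your parenthetical remark gestures at this, but the order of operations matters: integrate by parts first, then differentiate in $s$; this works because $\{H,\bar f_s^H\}=\{H,\bar f\}\circ\Phi_{-s}^H$, since $H\circ\Phi_{-s}^H=H$ and the flow is symplectic, so differentiating in $s$ consumes only the second derivative of $\bar f$). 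Second, your Gronwall bound on $\partial_s\nabla\bar f_s^H$ introduces the factor $e^{C\|\nabla^2 H\|_{L^\infty}}$, so the constant you actually obtain depends on $H$, formally weaker than the stated ``$C$ depending only on $\bar f$''; this is harmless where the proposition is applied (there $\|\nabla^2H\|_{L^\infty}\leq 1$, and the paper's own Lemma \ref{lem:L1 g} carries the same exponential), but it can be avoided entirely with the commutation identity just quoted, which gives $\|\{H,\bar f_s^H\}\|_{L^p}=\|\{H,\bar f\}\|_{L^p}$ exactly, with no derivative of the flow map ever taken --- which is in effect what the paper's change-of-variables argument accomplishes. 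Your treatment of the potential terms (the Newtonian bound $\|\nabla\phi_\rho\|_{L^\infty}\leq C\|\rho\|_{L^1\cap L^q}$ for $q>3$, plus Calder\'on--Zygmund for $\nabla^2\phi$) matches the paper's use of $\nabla K\in L^p_{loc}$ for $p<3/2$, and your exponent bookkeeping closes to the claimed cubic bound.
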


As a first step towards the proof of the above result, we aim to give a characterization of the stationary solutions of \eqref{gvp} in terms of the energy of the system $\mathcal H.$

%

\begin{lem}
\label{lem:stat}
Let $f:\br^6\to \br$ be a compactly supported function.
Then $f$ is a steady state for $\eqref{gvp}$ if and only if
$$
\frac{d}{ds}\mathcal{H}(f_s^H)|_{s=0}=0\qquad \text{for all $H \in C^2(\br^6)$,}
$$
where 
 $f^H_s:=(\Phi_s^H)_\#f$.
\end{lem}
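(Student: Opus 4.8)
The plan is to compute the first variation $\frac{d}{ds}\mathcal{H}(f_s^H)\big|_{s=0}$ explicitly and recognize it as the pairing of $H$ against the stationary Vlasov--Poisson operator applied to $f$. Throughout, write $e_f(x,v):=\frac{|v|^2}{2}+\phi_f(x)$ for the microscopic energy. Since $f_s^H=(\Phi_s^H)_\# f$ and the Hamiltonian vector field $J\grad H$ is divergence-free, $f_s^H$ solves the transport equation
$$
\pt_s f_s^H+\{H,f_s^H\}=0,\qquad \{H,f_s^H\}=\Div(J\grad H\,f_s^H)=J\grad H\cdot\grad f_s^H,
$$
exactly as in the proof of Lemma \ref{lem:inv}. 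In particular $\pt_s f_s^H\big|_{s=0}=-\{H,f\}$.

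Next I would differentiate the two pieces of $\mathcal{H}$ in $s$. The kinetic part contributes $\frac{1}{2}\int|v|^2\pt_s f_s^H\,dx\,dv$ directly. For the potential part I would integrate by parts using $\Delta\phi_f=\rho_f$ together with the symmetry of the kernel $K$ (so that $\int \rho\,(K*\dot\rho)=\int (K*\rho)\,\dot\rho$); this gives $\frac{d}{ds}\bigl(\frac{1}{2}\int|\grad\phi_{f_s^H}|^2\bigr)=-\int\phi_{f_s^H}\,\pt_s f_s^H\,dx\,dv$, where the factor $1/2$ disappears precisely because the quadratic form is symmetric. This is the only genuinely nonlinear step. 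Adding the two contributions, the self-consistent potential merges with the kinetic term into the microscopic energy:
$$
\frac{d}{ds}\mathcal{H}(f_s^H)=\int_{\br^6}e_{f_s^H}\,\pt_s f_s^H\,dx\,dv,\qquad\text{hence}\qquad \frac{d}{ds}\mathcal{H}(f_s^H)\Big|_{s=0}=-\int_{\br^6}e_f\,\{H,f\}\,dx\,dv.
$$

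I would then integrate by parts once more to move all derivatives off $H$. Writing $\{H,f\}=\sum_{ij}J_{ij}\pt_j H\,\pt_i f$ and integrating by parts in the $j$-th variable (boundary terms vanish because $\pt_i f$ is compactly supported), the Hessian term $e_f\sum_{ij}J_{ij}\pt_{ij}f$ drops out since $J$ is antisymmetric while $\pt_{ij}f$ is symmetric. What survives is exactly
$$
\frac{d}{ds}\mathcal{H}(f_s^H)\Big|_{s=0}=\int_{\br^6}\{e_f,f\}\,H\,dx\,dv,\qquad \{e_f,f\}=v\cdot\grad_x f-\grad\phi_f\cdot\grad_v f,
$$
so that $\{e_f,f\}$ is precisely the stationary Vlasov--Poisson operator applied to $f$. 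The equivalence is now immediate: if $f$ is a steady state then $\{e_f,f\}\equiv 0$ and the first variation vanishes for every $H$; conversely, if the first variation vanishes for all $H\in C^2(\br^6)$, then testing against all compactly supported $H$ and invoking the fundamental lemma of the calculus of variations forces $\{e_f,f\}=0$ a.e., which is the steady-state equation.

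The steps requiring care are the justification of differentiation under the integral sign and of the two integrations by parts; these are routine given the compact support of $f$ together with the regularity available in the applications (for $f=\bar f_1^H$ one has $\phi_f\in C^1$ inherited from $\bar f\in W^{2,q}$, $q>3$, and the decay of $\phi_f$ at infinity kills the boundary term in the potential-energy variation). The main conceptual obstacle is the nonlinear potential-energy term: it is the symmetry of the Newtonian kernel $K$ that produces the clean microscopic energy $e_f=\frac{|v|^2}{2}+\phi_f$ rather than a spurious factor of $\tfrac{1}{2}$ on $\phi_f$, and this is exactly what makes $\{e_f,f\}$ — the full stationary operator — emerge at the end.
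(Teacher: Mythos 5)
Your formal computation is correct and lands on the right identity: the first variation equals $-\int e_f\,\{H,f\}$, which after a further integration by parts pairs $H$ against the stationary operator $v\cdot\grad_x f-\grad\phi_f\cdot\grad_v f$. Note that this is a genuinely different route from the paper's: you argue in Eulerian form (transport equation for $f_s^H$, then two integrations by parts), whereas the paper argues in Lagrangian form, rewriting $\mathcal H(f_s^H)$ by the change of variables $(x,v)\mapsto\Phi_s^H(x,v)$ so that the $s$-derivative falls on the flow maps and on the smooth kernel $K$, and then changing variables back to reach \eqref{eq:first_var}. The payoff of the paper's route is that $f$ is \emph{never differentiated}; your route makes the structure $\delta\mathcal H=\int e_f\,\delta f$ more transparent, but at the price of derivatives landing on $f$.

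That price is where your argument has a genuine gap as written. The lemma assumes only that $f$ is compactly supported, and the steady-state condition is the weak formulation \eqref{eq:stationary}, in which all derivatives sit on the test function $\psi$. Your intermediate steps use $\grad f$ (in $\pt_s f_s^H=-\{H,f_s^H\}$ read pointwise, and in the first integration by parts, where you even invoke the compact support of $\pt_i f$ to kill boundary terms) and formally $\grad^2 f$ (the Hessian term you cancel against the antisymmetry of $J$); none of these exist for a merely bounded measurable $f$. Likewise the conclusion ``$\{e_f,f\}=0$ a.e.\ by the fundamental lemma'' presupposes that $\{e_f,f\}$ is a function, and deferring the justification to ``the regularity available in the applications'' does not prove the lemma at its stated generality. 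The repair is standard but must be made explicit: interpret the transport equation distributionally (as in Lemma \ref{lem:inv}), and perform a \emph{single} integration by parts that moves the derivative from $f$ onto $e_f$ rather than two that move everything onto $H$; since $\rho_f\in L^\infty$ with compact support gives $\phi_f\in C^{1,\alpha}$ (though not $C^2$ in general, so $\{e_f,f\}$ must stay distributional), the identity $-\int e_f\{H,f\}=-\int f\,\bigl(v\cdot\grad_x H-\grad\phi_f\cdot\grad_v H\bigr)$ holds in this weak sense and is exactly \eqref{eq:stationary} with $\psi=H$, after which density of $C^2$ in $C^1$ concludes as in the paper. With that modification your proof is valid and arguably cleaner; without it, it only proves the lemma for $f\in C^1$.
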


\begin{proof}
Fix $H \in C^2(\br^6)$, and consider its flow $\Phi_s^H$. To simplify the notation we set $\Phi^H_s=\Phi_s$.
Also, it will be convenient to write $\Phi_s=(\Phi_s^x,\Phi_s^v):\br^6\to \br^3\times\br^3$.

Given a compactly supported function
$f$, we compute the first variation of the Hamiltonian $\mathcal H$ around $f$ along $f^H_s:$ recalling that $K(x)=-\frac{1}{4\pi|x|}$ denotes the fundamental solution of the Laplacian, we have
\begin{align*}
\frac{d}{ds}\mathcal{H}(f^H_s) &= \frac{d}{ds}\[\frac{1}{2}\int_{\br^6}|v|^2f_s^H(x,v)\,\,dx\,dv-\frac{1}{2}\int_{\br^{6\times6}}K(x-y)f^H_s(x,v)f^H_s(y,w)\,\,dx\,dv\,dy\,dw\] \\
&=\frac{d}{ds}\bigg[\frac{1}{2}\int_{\br^6}|\Phi^v_s(x,v)|^2f(x,v)\,\,dx\,dv\\
&\qquad-\frac{1}{2}\int_{\br^{6\times6}}K(\Phi^x_s(x,v)-\Phi^x_s(y,v))f(x,v) f(y,w)\,\,dx\,dv\,dy\,dw\bigg] \\
&=\int_{\br^6}\Phi^v_s(x,v)\pt_s\Phi^v_s(x,v)f(x,v)\,\,dx\,dv \\
&\qquad-\frac{1}{2}\biggl[\int_{\br^{6\times6}}\grad_xK(\Phi^x_s(x,v)-\Phi^x_s(y,v))\cdot\\
&\qquad \qquad \qquad\qquad \qquad \qquad \cdot\pt_s (\Phi^x_s(x,v)-\Phi^x_s(y,v)) f(x,v) f(y,w)\,\,dx\,dv\,dy\,dw\biggr] .\\
\end{align*}
Recalling that
$$
\pt_s\Phi_s=\(\grad_v H(\Phi_s),-\grad_x H(\Phi_s)\),
$$
we have 
\begin{align*}
\frac{d}{ds}\mathcal{H}(f^H_s) &=-\int_{\br^6}\Phi^v_s(x,v)\cdot \grad_xH(\Phi_s(x,v)) f(x,v)\,\,dx\,dv \\
&\qquad-\frac{1}{2}\biggl[\int_{\br^{6\times6}}\grad_xK(\Phi^x_s(x,v)-\Phi^x_s(y,v))\cdot\\
&\qquad \qquad \qquad \qquad\cdot(\grad_v H(\Phi_s(x,v))-\grad_vH(\Phi_s(y,v))) f(x,v) f(y,w)\,\,dx\,dv\,dy\,dw\biggr] .\\
\end{align*}
Also, since $K(x-y)=K(y-x),$ we see that $\grad_xK(x-y)=-\grad_xK(y-x),$ so we can rewrite the above expression as
\begin{multline*}
\frac{d}{ds}\mathcal{H}(f^H_s) = -\int_{\br^6}\Phi^v_s(x,v)\cdot\grad_xH(\Phi_s(x,v)) f(x,v)\,\,dx\,dv \\
-\int_{\br^{6\times6}}\grad_xK(\Phi^x_s(x,v)-\Phi^x_s(y,v))\cdot\grad_v H(\Phi_s(x,v)) f(x,v) f(y,w)\,\,dx\,dv\,dy\,dw .
\end{multline*}
Also, using that
$\Phi_s$ preserves the Lebesgue measure
and that $(\Phi_s)^{-1}=\Phi_{-s}$, we can rewrite the first variation in the following way:
\begin{multline}\label{eq:first_var}
\frac{d}{ds}\mathcal{H}(f^H_s) = -\int_{\br^6}v\cdot\grad_xH(x,v) f(\Phi_{-s}(x,v))\,\,dx\,dv \\
-\int_{\br^{6\times6}}\grad_xK(x-y)\cdot\grad_v H(x,v) f(\Phi_{-s}(x,v)) f(\Phi_{-s}(y,w))\,\,dx\,dv\,dy\,dw .
\end{multline}
In particular, since $\Phi_s=Id$ for $s=0$,
we see that
\begin{multline}
\label{eq:first 0}
\frac{d}{ds}\mathcal{H}(f_s)|_{s=0} = -\int_{\br^6}v\cdot\grad_xH(x,v) f(x,v)\,\,dx\,dv \\
-\int_{\br^{6\times6}}\grad_xK(x-y)\cdot\grad_v H(x,y)) f(x,v) f(y,w)\,\,dx\,dv\,dy\,dw.
\end{multline}
On the other hand, for $f$ to be a stationary solution for the system $\eqref{gvp}$ means that
$$
\Div_x(vf)-\Div_v(\grad\phi_{f} f)=0,
$$
or equivalently, that for all $\psi \in C^1$,
\begin{equation}
\label{eq:stationary}
-\int_{\br^6}v\cdot\grad_x \psi(x,v) f(x,v)\,\,dx\,dv+\int_{\br^3}\grad_x\phi_{f}(x)\cdot\grad_v\psi(x,v) f(x,v)\,\,dx\,dv=0.
\end{equation}
Since
$$
\int \grad_xK(x-y)f(y,w)\,dydw=-\nabla \phi_{f}(x),
$$
\eqref{eq:first 0} proves that 
$$
\frac{d}{ds}\mathcal{H}(f_s^H)|_{s=0}=0
\qquad \Longleftrightarrow\qquad \text{\eqref{eq:stationary} holds with $\psi=H$.}
$$
Since $C^2$ functions are dense in $C^1$ for the $C^1$ topology, this proves the result.
\end{proof}

\subsubsection{Second variation for $\mathcal H$}

As a second step, we compute the second variation for $\mathcal H,$ in line with the computation of the first variation $\eqref{eq:first_var}.$ 
Here we consider as initial condition $\bar f$
and, given a Hamiltonian $H \in C^2$, we consider $\bar f_s^H:=\bar f\circ \Phi_{-s}^H$.
As before, to simplify the notation, we set $\Phi_s=\Phi_s^H$.
Also, we define
\begin{equation}\label{eq:g}
g:=\grad \bar f\cdot J \grad H=\{H,\bar f\}
\end{equation}
and we observe that
\begin{equation}\label{eq:d/ds f}
\frac{d}{ds}\bar f(\Phi_{-s})=g(\Phi_{-s}).
\end{equation}
Using \eqref{eq:first_var} and \eqref{eq:d/ds f}, the second variation is given by the following formula:
\begin{align*}
\frac{d^2}{d^2s}\mathcal{H}(\bar f_s^H)&= \[\int_{\br^6}v\cdot\grad_xH(x,v)\,g(\Phi_{-s}(x,v))\,\,dx\,dv\] \\
&+\biggl[\int_{\br^{6\times6}}\grad_{x}K(x-y)\cdot\grad_{v}H(x,v) g(\Phi_{-s}(x,v))\bar f(\Phi_{-s}(y,w))\,\,dx\,dv\,dy\,dw\biggr]\\
&+\biggl[\int_{\br^{6\times6}}\grad_{x}K(x-y)\cdot\grad_{v}H(x,v)\bar f(\Phi_{-s}(x,v))g(\Phi_{-s}(y,w))\,dx\,dv\,dy\,dw\biggr].\\
\end{align*}
Thus, we obtain:
\begin{align}\label{eq:second_var}
&\frac{d^2}{d^2s}\mathcal{H}(\bar f_s^H)=\[\int_{\br^6}v\cdot\grad_xH(x,v)\,g(\Phi_{-s}(x,v))\,\,dx\,dv\]\\ \nn
&+\biggl[\int_{\br^{6\times6}}\grad_{x}K(x-y)\cdot[\grad_{v}H(x,v)-\grad_{w}H(y,w)]\bar f(\Phi_{-s}(x,v))g(\Phi_{-s}(y,w))\,dx\,dv\,dy\,dw\biggr].\\ \nn
\end{align}

\subsubsection{Proof of Proposition \ref{prop:upper}}
As in the previous section, we set 
$\bar f_s^H:=\bar f\circ \Phi_{-s}^H$ and $\Phi_s:=\Phi_s^H$.
Recall that, by assumption, $\bar f_1^H$
is a stationary solution of \eqref{gvp}.

We now study the Taylor expansion of the Hamiltonian of the gravitational Vlasov Poisson system both at $\bar f$ and at $\bar f_1^H$. Since $\bar f$ and $\bar f_1^H$ are two stationary solutions, 
it follows by Lemma \ref{lem:stat} 
applied both to $\bar f$ and to $\bar f_1^H$ that
$$
\frac{d}{ds}\mathcal H(\bar f_s^H)|_{s=0}
=\frac{d}{ds}\mathcal H(\bar f\circ \Phi_{-s})|_{s=0}=0
$$
and
$$
\frac{d}{ds}\mathcal H(\bar f_s^H)|_{s=1}
=\frac{d}{d\tau}\mathcal H(\bar f_1^H\circ \Phi_{-\tau})|_{\tau=0}
=0.
$$
Hence, by Taylor's formula,
 \begin{align*}
\mathcal{H}(\bar f_1^H)=\mathcal{H}(\bar f)+\int_0^1(1-s)\frac{d^2}{d^2s}\mathcal{H}(\bar f_s^H)\,ds
\end{align*}
and
\begin{align*}
\mathcal{H}(\bar f)=\mathcal{H}(\bar f_1^H)+\int_0^1s\frac{d^2}{d^2s}\mathcal{H}(\bar f_s^H)\,ds.
\end{align*}
Therefore, 
\begin{align*}
\mathcal{H}(\bar f_1^H)-\mathcal{H}(\bar f)=\int_0^1(1-2s)\frac{d^2}{d^2s}\mathcal{H}(\bar f_s^H)\,ds.
\end{align*}
Since 
$$
\int_0^1(1-2s)ds=0
$$
we can add a constant term in the integral, and we get
\begin{align}\label{eq:taylor}
\mathcal{H}(\bar f_1^H)-\mathcal{H}(\bar f)=\frac{1}{2}\int_0^1(1-2s)\(\frac{d^2}{d^2s}\mathcal{H}(\bar f_s^H)-\frac{d^2}{d^2s}\mathcal{H}(\bar f_s^H)|_{s=0}\)\,ds.
\end{align}
Thanks to the latter computation, in order to estimate the left hand side of $\eqref{eq:taylor}$, we can estimate
$$
\frac{d^2}{d^2s}\mathcal{H}(\bar f_s^H)-\frac{d^2}{d^2s}\mathcal{H}(\bar f_s^H)|_{s=0}
$$
in terms of the regularity of $H$ and $\bar f$.
Recalling that $\Phi_0=\Id$ and the definition of $g$ in \eqref{eq:g}, we have the following:
\begin{align}\label{eq:stima derivate seconde}
&\frac{d^2}{d^2s}\mathcal{H}(\bar f_s^H) - \frac{d^2}{d^2s}\mathcal{H}(\bar f_s^H)|_{s=0} = \int_{\br^6}v\cdot\grad_xH(x,v)\,g(\Phi_{-s}(x,v))\,\,dx\,dv \\ \nn
& - \int_{\br^6}v\cdot\grad_xH(x,v)\,g(x,v)\,\,dx\,dv\\ \nn
&+\int_{\br^{6\times6}}\grad_{x}K(x-y)\cdot[\grad_{v}H(x,v)-\grad_{w}H(y,w)]\bar f(\Phi_{-s}(x,v))g(\Phi_{-s}(y,w))\,dx\,dv\,dy\,dw\\ \nn
&-\int_{\br^{6\times6}}\grad_{x}K(x-y)\cdot[\grad_{v}H(x,v)-\grad_{w}H(y,w)]\bar f(x,v)g(y,w)\,dx\,dv\,dy\,dw.\\ \nn
\end{align}
Thus,
$$
\bigg|\frac{d^2}{d^2s}\mathcal{H}(\bar f_s^H) - \frac{d^2}{d^2s}\mathcal{H}(\bar f_s^H)|_{s=0}\bigg|\leq T_1+T_2,
$$
where
\begin{align*}
T_1: =\bigg|\int_{\br^6}v\cdot\grad_xH(x,v)\,[g(\Phi_{-s}(x,v)-g(x,v)]\,\,dx\,dv \bigg|,
\end{align*}
\begin{align*}
T_2: &=\biggl|\int_{\br^{6\times6}}\grad_{x}K(x-y)\cdot[\grad_{v}H(x,v)-\grad_{w}H(y,w)]\bar f(\Phi_{-s}(x,v))g(\Phi_{-s}(y,w))\,dx\,dv\,dy\,dw\\
&\qquad-\int_{\br^{6\times6}}\grad_{x}K(x-y)\cdot[\grad_{v}H(x,v)-\grad_{w}H(y,w)]\bar f(x,v)g(y,w)\,dx\,dv\,dy\,dw\biggr|.
\end{align*}
We begin by controlling $T_1$.

By the Fundamental Theorem of Calculus,
\begin{align*}
T_1&\le C\|\grad_xH\|_{L^\infty}\int_{\br^6} \biggl|g(\Phi_{-s}(x,v))-g(x,v)\,\biggr|\,dx\,dv\\
&\le C\|\grad_xH\|_{L^\infty}\int_{\br^6}\biggl(\int_0^1 \biggl|\grad g(\Phi_{-\tau s}(x,v))\cdot \pt_{\tau}\Phi_{-\tau s}(x,v)\,\biggr|\,d\tau\biggr)\,dx\,dv\\
\end{align*}
Using that $\partial_s\Phi_s=J\grad H(\Phi_s)$ and that $\Phi_s$ preserves the volumes, we get
\begin{align*}
T_1&\le C\|\grad H\|^2_\infty\int_{\br^6}\biggl(\int_0^1 \biggl|\grad g(\Phi_{-\tau s}(x,v))\biggr|\,d\tau\biggr)\,dx\,dv\\
&= C\|\grad H\|^2_\infty\int_{\br^6}\biggl(\int_0^1 \biggl|\grad g(x,v)\biggr|\,ds\biggr)\,dx\,dv
\le C\|\grad H\|^2_\infty\|\grad g\|_{L^1}.
\end{align*}
Also, by the definition of $g$ in \eqref{eq:g}, 
\begin{equation}\label{eq:grad g}
\grad g=\nabla^2 H\cdot J\nabla \bar f+\nabla H\cdot J\nabla^2 \bar f,
\end{equation}
therefore
$$
\|\grad g\|_{L^1} \leq \|\nabla^2 H\|_{L^\infty} \|\nabla \bar f\|_{L^1}+\|\nabla H\|_{L^\infty} \|\nabla^2 \bar f\|_{L^1}
\leq C\Bigl(\|\nabla H\|_{L^\infty}+\|\nabla^2 H\|_{L^\infty}\Bigr),
$$
where $C$ depends on $\|\nabla \bar f\|_{L^1}$ and $\|\nabla^2 \bar f\|_{L^1}$.
In conclusion, the first term $T_1$ can be estimate as follows:
\begin{equation}\label{eq:T_1}
T_1 \le  C\|\grad H\|^2_\infty\|\Bigl(\|\nabla H\|_{L^\infty}+\|\nabla^2 H\|_{L^\infty}\Bigr).
\end{equation}

We now estimate the second term:
\begin{align*}
T_2\le \|\grad H\|_{L^\infty}\int_{\br^{6\times6}}|\grad_{x}K(x-y)|\,|\bar f(\Phi_{-s}(x,v))g(\Phi_{-s}(y,w))-\bar f(x,v)g(y,w)|\,dx\,dv\,dy\,dw.
\end{align*}
Adding and subtracting $\bar f(x,v)g(\Phi_{-s}(y,w))$, we can bound
\begin{align*}
T_2&\le\|\grad H\|_{L^\infty}\int_{\br^{6\times6}}|\grad_{x}K(x-y)|\,|\bar f(\Phi_{-s}(x,v))g(\Phi_{-s}(y,w))-\bar f(x,v)g(\Phi_{-s}(y,w))|\,dx\,dv\,dy\,dw\\
&\qquad+\|\grad H\|_{L^\infty}\int_{\br^{6\times6}}|\grad_{x}K(x-y)|\,|\bar f(x,v)g(\Phi_{-s}(y,w))-\bar f(x,v)g(y,w)|\,dx\,dv\,dy\,dw\\
&\le C\|\grad H\|_{L^\infty}\|g\|_{L^\infty}\int_{\br^3\times B_R}|\grad_{x}K(x-y)|\,|\bar f(\Phi_{-s}(x,v))-\bar f(x,v)|\,dx\,dv\,dy\,dw\\
&\qquad+C\|\grad H\|_{L^\infty}\|\bar f\|_{L^\infty}\int_{B_R\times\br^3}|\grad_{x}K(x-y)|\,|g(\Phi_{-s}(y,w))-g(y,w)|\,dx\,dv\,dy\,dw.\\
\end{align*}
Using as before the Fundamental Theorem of Calculus and the fact that $\Phi_s$ is measure preserving, we have
\begin{align*}
T_2&\le C\|\grad H\|_{L^\infty}\|g\|_{L^\infty}\int_{\br^6\times B_R}|\grad_{x}K(x-y)|\cdot \\
&\qquad \qquad \qquad \qquad \qquad \qquad \qquad \cdot\biggl(\int_0^1|\grad \bar f(\Phi_{-\tau s}(x,v))\cdot\pt_{s}\Phi_{-\tau s}(x,v)|\,d\tau\biggr)\,dx\,dv\,dy\,dw\\
&\qquad+C\|\grad H\|_{L^\infty}\|\bar f\|_{L^\infty}\int_{B_R\times\br^6}|\grad_{x}K(x-y)|\cdot \\
&\qquad \qquad \qquad \qquad \qquad \qquad\qquad \cdot\biggl(\int_0^1|\grad g(\Phi_{-\tau s}(y,w))\cdot\pt_{s}\Phi_{-\tau s}(y,w)|\,d\tau\biggr)\,dx\,dv\,dy\,dw.\\
\end{align*}
Using again $\eqref{eq:g}$, we obtain
\begin{align*}
T_2&\le  C\|\grad H\|_{L^\infty}\|g\|_{L^\infty}\int_{\br^6\times B_R}|\grad_{x}K(x-y) |\biggl(\int_0^1|g(\Phi_{-\tau s}(x,v))|\,d\tau\biggr)\,dx\,dv\,dy\,dw\\
&\qquad+C\|\grad H\|_{L^\infty}\|\bar f\|_{L^\infty}\int_{B_R\times\br^6}|\grad_{x}K(x-y)|\cdot \\
&\qquad\qquad \qquad \qquad \qquad \qquad  \cdot\biggl(\int_0^1|\grad g(\Phi_{-\tau s}(y,w))\cdot J\grad H(\Phi_{-\tau s})(-s)|\,d\tau\biggr)\,dx\,dv\,dy\,dw,\\
\end{align*}
that combined with H\"older inequality and the fact that $\Phi_{-\tau s}$ is measure preserving, yields
\begin{align*}
T_2 &\le C\|\grad H\|_{L^\infty}\|g\|^2_\infty\int_{B_R\times B_R}|\grad_{x}K(x-y)|\,dx\,dy\\
&\qquad+C\|\grad H\|^2_\infty\|\bar f\|_{L^\infty}\|\grad g\|_{L^q}\biggl(\int_{B_R\times B_R}|\grad_{x}K(x-y)|^p\,dx\,dy\biggr)^{\frac{1}{p}},
\end{align*}
where $p$ and $q$ are conjugate exponents.
In order to have integrability of the gradient of the kernel $K$, we need $p<\frac{3}{2}.$ Therefore, in the previous estimates we need to assume that $\|\grad g\|_{L^q}$ is finite for some $q>3.$
Thus
\begin{align*}
T_2 &\le C\bigl(\|\grad H\|_{L^\infty}\|g\|^2_{L^\infty} +\|\grad H\|^2_{L^\infty}\|\grad g\|_{L^q}\bigr),\qquad\ q>3,
\end{align*}
where $C$ depends on $\|\bar f\|_{L^\infty}$.
As in the estimate of the term $T_1$ we use $\eqref{eq:g}$ and $\eqref{eq:grad g}$ to get
$$
\|g\|_{L^\infty} \leq \|\nabla H\|_{L^\infty}  \|\nabla \bar f\|_{L^\infty} \leq C\|\nabla H\|_{L^\infty}
$$
and
$$
\|\grad g\|_{L^q} \leq \|\nabla^2 H\|_{L^\infty} \|\nabla \bar f\|_{L^q}+\|\nabla H\|_{L^\infty} \|\nabla^2 \bar f\|_{L^q}
\leq C\Bigl(\|\nabla H\|_{L^\infty}+\|\nabla^2 H\|_{L^\infty}\Bigr),
$$
where $C$ depends on $\|\nabla \bar f\|_{L^q}$ and $\|\nabla^2 \bar f\|_{L^q}$ for some $q>3$.
Since by assumption $\bar f\in {W^{2,q}(\br^6)}$ for some $q>3$,
we have prove that
\begin{align}\label{eq:T_2}
T_2 &\le C\bigl(\|\grad H\|^3_{L^\infty}+\|\grad H\|^2_{L^\infty}\|\grad^2H \|_{L^\infty}\bigr).
\end{align}
Hence, combining $\eqref{eq:stima derivate seconde},$ $\eqref{eq:T_1},$ and $\eqref{eq:T_2}$ we finally obtain
$$
|\mathcal{H}(\bar f_1^H)-\mathcal{H}(\bar f)|\le C\bigl(\|\grad H\|^3_{L^\infty}+\|\grad H\|^2_{L^\infty}\|\grad^2H \|_{L^\infty}\bigr),
$$
where $C$ depends only on $\|\bar f\|_{L^\infty}$, $\|\nabla \bar f\|_{L^q}$, and $\|\nabla^2 \bar f\|_{L^q}$, for some $q>3$. \qed

\subsection{Comparing $\|\bar f-\bar f_1^H\|_{L^1}$ and $\|g\|_{L^1}$}
Our next step is to relate $\|\bar f-\bar f_1^H\|_{L^1}$ and $\|g\|_{L^1}$.

\begin{lem}
\label{lem:L1 g}
Let $\bar f$ be a compactly supported steady state such that $\nabla \bar f,\nabla^2 \bar f\in {L^1(\br^6)}$.
Also, let $H \in C^2(\br^6)$,
and define $g$ as in \eqref{eq:g}.
Set $\bar f_1^H:=\bar f\circ \Phi_{-1}^H$.
Then
$$
\Big|\|\bar f-\bar f_1^H\|_{L^1}-\|g\|_{L^1}\Big|\leq C \|\grad H \|_{L^\infty}\( \|\grad H \|_{L^\infty}+\|\grad^2 H \|_{L^\infty}e^{ \| \grad^2H\|_{L^\infty}}\),
$$
where $C$ depends only on $\bar f$.
\end{lem}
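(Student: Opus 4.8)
The plan is to integrate the identity \eqref{eq:d/ds f} in the flow parameter and then compare the resulting expression with $g$, which is precisely its value at $s=0$. Set $\Phi_s=\Phi_s^H$. By \eqref{eq:d/ds f} and the fundamental theorem of calculus, $\bar f_1^H-\bar f=\bar f(\Phi_{-1})-\bar f=\int_0^1 g(\Phi_{-s})\,ds$. Since $g$ is independent of $s$ we may write $g=\int_0^1 g\,ds$, so that $\bar f_1^H-\bar f-g=\int_0^1\bigl(g(\Phi_{-s})-g\bigr)\,ds$. The reverse triangle inequality for the $L^1$ norm (with $a=\bar f_1^H-\bar f$ and $b=g$) then gives $\bigl|\,\|\bar f-\bar f_1^H\|_{L^1}-\|g\|_{L^1}\,\bigr|\le\|\bar f_1^H-\bar f-g\|_{L^1}\le\int_0^1\|g(\Phi_{-s})-g\|_{L^1}\,ds$, which reduces the whole statement to controlling how far the flow moves $g$ in $L^1$.

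Next I would estimate $\|g(\Phi_{-s})-g\|_{L^1}$ for fixed $s\in[0,1]$. Writing $g(\Phi_{-s}(z))-g(z)=\int_0^s\nabla g(\Phi_{-\tau}(z))\cdot\partial_\tau\Phi_{-\tau}(z)\,d\tau$ and using $\partial_\tau\Phi_{-\tau}=-J\nabla H(\Phi_{-\tau})$ from \eqref{phi}, whose modulus is bounded by $\|\nabla H\|_{L^\infty}$, I obtain the pointwise bound $|g(\Phi_{-s}(z))-g(z)|\le\|\nabla H\|_{L^\infty}\int_0^s|\nabla g(\Phi_{-\tau}(z))|\,d\tau$. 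Integrating in $z$ and using that the Hamiltonian flow $\Phi_{-\tau}$ preserves Lebesgue measure, so that $\int|\nabla g(\Phi_{-\tau}(z))|\,dz=\|\nabla g\|_{L^1}$, this yields $\|g(\Phi_{-s})-g\|_{L^1}\le s\,\|\nabla H\|_{L^\infty}\|\nabla g\|_{L^1}$, whence $\int_0^1\|g(\Phi_{-s})-g\|_{L^1}\,ds\le\tfrac12\|\nabla H\|_{L^\infty}\|\nabla g\|_{L^1}$.

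Finally I would insert the formula \eqref{eq:grad g} for $\nabla g$, which gives $\|\nabla g\|_{L^1}\le\|\nabla^2H\|_{L^\infty}\|\nabla\bar f\|_{L^1}+\|\nabla H\|_{L^\infty}\|\nabla^2\bar f\|_{L^1}\le C\bigl(\|\nabla H\|_{L^\infty}+\|\nabla^2H\|_{L^\infty}\bigr)$, with $C$ depending on $\|\nabla\bar f\|_{L^1}$ and $\|\nabla^2\bar f\|_{L^1}$. Combining the three steps produces $\bigl|\,\|\bar f-\bar f_1^H\|_{L^1}-\|g\|_{L^1}\,\bigr|\le C\|\nabla H\|_{L^\infty}\bigl(\|\nabla H\|_{L^\infty}+\|\nabla^2H\|_{L^\infty}\bigr)$, which is in fact stronger than the claimed bound because $e^{\|\nabla^2H\|_{L^\infty}}\ge1$. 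If instead one prefers to transport the derivatives of $\bar f$ rather than those of $g$ along the flow, then the Gronwall estimate $\|\nabla\Phi_{-s}\|_{L^\infty}\le e^{\|\nabla^2H\|_{L^\infty}}$ coming from $\partial_s\nabla\Phi_s=J\nabla^2H(\Phi_s)\nabla\Phi_s$ enters the change of variables and reproduces the stated exponential form verbatim.

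The only genuine obstacle is \emph{regularity}: the differentiation of $s\mapsto g(\Phi_{-s})$ in the second step requires $g\in C^1$, i.e. $\bar f\in C^2$, whereas the hypotheses only furnish $\nabla\bar f,\nabla^2\bar f\in L^1$. I would dispose of this by mollifying $\bar f$, running the computation above for the smooth approximations, and then passing to the limit: since $\Phi_{-s}$ is bi-Lipschitz and measure preserving and everything is compactly supported, each quantity in the estimate is continuous with respect to convergence of $\nabla\bar f$ and $\nabla^2\bar f$ in $L^1$, so the final inequality is stable in the limit. Compact support of $\bar f$, and hence of $g$, guarantees that all integrals appearing above are finite.
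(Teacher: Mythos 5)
Your proof is correct, and it takes a genuinely different---and in fact cleaner---decomposition than the paper's. The paper also starts from $\bar f_1^H-\bar f=\int_0^1\partial_s\bar f_s^H\,ds$ and reduces the claim, via the same reverse triangle inequality, to bounding $\int_0^1\|\partial_s\bar f_s^H-\partial_s\bar f_s^H|_{s=0}\|_{L^1}\,ds$; but it then keeps the Eulerian form $\partial_s\bar f_s^H=-J\nabla H\cdot\nabla\bar f_s^H$, so the chain rule $\nabla(\bar f\circ\Phi^H_{-s})=\nabla\bar f\circ\Phi^H_{-s}\cdot\nabla\Phi^H_{-s}$ brings in the Jacobian of the flow. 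Adding and subtracting $\nabla\bar f\circ\Phi^H_{-s}$ splits the error into a term $A$ estimated exactly by your FTC-plus-measure-preservation argument (giving $\tfrac12\|\nabla^2\bar f\|_{L^1}\|\nabla H\|_{L^\infty}$) and a term $B$ controlled by the Gronwall bound $|\nabla\Phi^H_s-\operatorname{Id}|\le\|\nabla^2H\|_{L^\infty}e^{s\|\nabla^2H\|_{L^\infty}}$, which is the sole source of the exponential in the statement. By differentiating the Lagrangian composition $g\circ\Phi_{-\tau}$ directly, you never meet the Jacobian at all, and you correctly obtain the strictly stronger bound $C\,\|\nabla H\|_{L^\infty}\bigl(\|\nabla H\|_{L^\infty}+\|\nabla^2H\|_{L^\infty}\bigr)$, which implies the stated one since $e^{\|\nabla^2H\|_{L^\infty}}\ge1$ (your own closing remark about reproducing the exponential form is only needed if one insists on matching the paper's route verbatim). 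Two further points in your favor: your mollification step addresses a genuine regularity issue that the paper glosses over---its term $A$ likewise differentiates $\tau\mapsto\nabla\bar f\circ\Phi^H_{-\tau}$, which needs $\bar f\in C^2$ or precisely the $W^{2,1}$-approximation argument you describe, legitimate since the flow is bi-Lipschitz and measure preserving; and the sign you inherit from \eqref{eq:d/ds f} (a direct computation gives $\frac{d}{ds}\bar f(\Phi_{-s})=-g(\Phi_{-s})$ with $g=\nabla\bar f\cdot J\nabla H$) is harmless, as your argument implicitly uses, because only $\|\pm g\|_{L^1}=\|g\|_{L^1}$ ever enters.
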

\begin{proof}
Set $\bar f_s^H:=\bar f\circ \Phi_{-s}^H$.
Then, by the definition of the flow $\Phi_s$ (see \eqref{phi}), we deduce
\begin{equation}\label{eq:der f_s}
\pt_s \bar f_s^H=-J\grad H \cdot \grad \bar f_s^H,\qquad \pt_s \bar f_s^H|_{s=0}=-J\grad H \cdot \grad \bar f,
\end{equation}
therefore \eqref{eq:der f_s} and \eqref{eq:g} yield
\begin{align*}
\bar f_1^H-\bar f&=\int_0^1\pt_s \bar f_s^H\,ds=\pt_s \bar f_s^H|_{s=0}+\int_0^1(\pt_s \bar f_s^H-\pt_s \bar f_s^H|_{s=0})\,ds\\
&=-J\grad H\cdot \grad \bar f+\int_0^1(\pt_s \bar f_s^H-\pt_s \bar f_s^H|_{s=0})\,ds\\
&=-g+\int_0^1(\pt_s \bar f_s^H-\pt_s \bar f_s^H|_{s=0})\,ds.
\end{align*}
Thus,
\begin{align*}
\bigg| \int_{\br^6} |\bar f_1^H-\bar f|\,dx\,dv&- \int_{\br^6} |g|\,dx\,dv\bigg|\le\int_0^1\int_{\br^6}|\pt_s \bar f_s^H-\pt_s \bar f_s^H|_{s=0}|\,\,dx\,dv\,ds.
\end{align*}
Using again \eqref{eq:der f_s}, we get
\begin{align*}
&\bigg| \int_{\br^6} |\bar f_1^H-\bar f|\,dx\,dv- \int_{\br^6} |g|\,dx\,dv\bigg|\le\int_0^1\int_{\br^6}|J\grad H \cdot \grad \bar f_s^H-J\grad H \cdot \grad \bar f|\,dx\,dv\,ds\\
&\le \int_0^1ds \int_{\br^6} |\grad H|\, |\grad \bar f\circ \Phi^H_{-s}\cdot\grad \Phi^H_{-s}-\grad \bar f|\,dx\,dv.
\end{align*}
Adding and subtracting $\grad \bar f \circ \Phi^H_{-s}$, this gives
\begin{align*}
\bigg| \int_{\br^6} |\bar f_1^H-\bar f|\,dx\,dv- \int_{\br^6} |g|\,dx\,dv\bigg|&\le \|\grad H \|_{L^\infty} \bigg(\int_0^1 ds \int_{\br^6}|\grad \bar f\circ \Phi^H_{-s}-\grad \bar f |\,dx\,dv\\
&\qquad+ \int_0^1 ds \int_{\br^6} |\grad \bar f\circ \Phi_{-s}^H |\,|\grad \Phi_{-s}^H-\Id| \,dx\,dv\bigg)\\
&=: \|\grad H \|_{L^\infty} (I+II).
\end{align*}
We now estimate the terms $I$ and $II$.
By the Fundamental Theorem of Calculus,
\begin{align*}
I&
=\int_0^1 ds \int_{\br^6}\biggl|\int_0^s\frac{d}{d\tau}\grad \bar f\circ \Phi^H_{-\tau}\,d\tau\biggr|\,dx\,dv\\
&=\int_0^1 ds \int_{\br^6}\biggl|\int_0^s\grad^2 \bar f\circ \Phi^H_{-\tau}\cdot J\nabla H\circ \Phi_{-\tau}^H\,d\tau\biggr|\,dx\,dv\\
&\leq\|\grad H\|_{L^\infty}\int_0^1 ds \int_{\br^6}\int_0^s|\grad^2 \bar f|\circ \Phi^H_{-\tau} \,d\tau\,dx\,dv
\end{align*}
By Fubini, we can rewrite the last integral above as
$$
\int_0^1 ds \int_0^s d\tau \int_{\br^6}|\grad^2 \bar f|\circ \Phi^H_{-\tau} \,dx\,dv
$$
and because $\Phi_{-\tau}^H$ is measure preserving we deduce that the term above is equal to
$$
\int_0^1 ds \int_0^s d\tau \int_{\br^6}|\grad^2 \bar f| \,dx\,dv=\frac{1}{2}\| \grad^2 \bar f\|_{L^1}.
$$
Hence, in conclusion,
$$
I\le \frac12 \| \grad^2 \bar f\|_{L^1} \|\grad H\|_{L^\infty}.
$$
For $II$, we want to estimate the term
$$
|\grad \bar f\circ\Phi_{-s}^H|\,|\grad \Phi_{-s}^H-\Id|.
$$
Differentiating the equation in \eqref{phi}, we deduce that
\begin{equation}\label{eq:grad phi}
 \left\{ \begin{array}{ccc}\partial_s\grad \Phi_s^H=J\grad^2 H(\Phi_s^H)\cdot \grad \Phi_s^H \\
\grad \Phi_0^H(x,v)=\Id.\\
\end{array} \right.
\end{equation}
Thus,
\begin{equation}\label{eq:stima grad phi}
 \left\{ \begin{array}{ccc}\frac{d}{ds}|\grad \Phi_s^H|\le \|\grad^2 H\|_{L^\infty}|\grad \Phi_s^H| \\
|\grad \Phi_0^H|=1\\
\end{array} \right.
\end{equation}
and by Gronwall's inequality 
\begin{equation}\label{eq:gronwall grad phi}
|\grad \Phi_s^H|\le e^{s \| \grad^2H\|_{L^\infty}}.
\end{equation}
Therefore, thanks to \eqref{eq:stima grad phi} and 
\eqref{eq:gronwall grad phi},
\begin{align*}
|\grad \Phi_s^H-\Id|=\biggl|\int_0^s\pt_\tau\grad\Phi_\tau^H\,d\tau\biggr|\leq   \|\grad^2H \|_{L^\infty}\sup_{\tau \in [0,s]}|\grad \Phi_\tau^H|
\leq  \|\grad^2H \|_{L^\infty} e^{s \| \grad^2H\|_{L^\infty}},
\end{align*}
which yields
$$
II\le  \|\grad^2H \|_{L^\infty}e^{ \| \grad^2H\|_{L^\infty}} \int_{\br^6}|\grad \bar f|\,dx\,dv.
$$
Combining the bounds on $I$ and $II$, we conclude that 
$$
\bigg| \int_{\br^6} |f_1-\bar f|\,dx\,dv- \int_{\br^6} |g|\,dx\,dv\bigg|\le C \|\grad H \|_{L^\infty}\( \|\grad H \|_{L^\infty}+\|\grad^2 H \|_{L^\infty}e^{ \| \grad^2H\|_{L^\infty}}\),
$$
where $C$ is a constant depending only on $\|\grad \bar f\|_{L^1}$ and $\|\grad^2 \bar f\|_{L^1}$.
\end{proof}

\subsection{Proof of Theorem \ref{thm:main}}
In this section we combine the upper and lower bounds obtained in Sections \ref{sec:lower} and \ref{sec:upper}  with interpolation estimates to obtain a contradiction to the existence of a stationary solution $\bar f_1^H$,
with $H$ as in the statement of Theorem \ref{thm:main}.

We begin by recalling that, by the Sobolev's embedding, given $R>0$ and $u:B_R\to \br$ compactly supported,
\begin{equation}
\label{eq:Sobolev}
\|u\|_{L^\infty(B_R)} \leq C_{n,R} \|\nabla^s u\|_{L^2(B_R)}\qquad \forall\,s>n/2. 
\end{equation}
In particular, since in our case $n=6$, if $H$ is as in the statement of the theorem then $\|\nabla H\|_{L^\infty}+\|\nabla^2 H\|_{L^\infty}$ is as small a desired provided we choose $\eps$ small enough.
This allows us to apply Lemma \ref{lem:bar}, that combined with Lemma \ref{lem:L1 g} yields following bound on $g$:
$$
\|g\|_{L^1}\le C \(\sqrt{\mathcal{H}(\bar f_1^H)-\mathcal{H}(\bar f)}+ \|\grad H\|^2_{L^\infty}+\|\grad H\|_{L^\infty}\|\grad^2 H \|_{L^\infty}e^{ \| \grad^2H\|_{L^\infty}}\).
$$
Then, using Proposition \ref{prop:upper},
\begin{multline*}
\|g\|_{L^1}\le C \biggl(\|\nabla H\|_{L^\infty}\Bigl(\|\nabla H\|_{L^\infty}+\|\nabla^2 H\|_{L^\infty}\Bigr)^{\frac{1}{2}}\\
+ \|\grad H\|^2_{L^\infty}+\|\grad H\|_{L^\infty}\|\grad^2 H \|_{L^\infty}e^{ \| \grad^2H\|_{L^\infty}}\biggr).
\end{multline*}
We now use the assumption $H \in \mathcal{A}_k$ to get 
\begin{multline}\label{eq:grad H}
\|\nabla H\|_{L^1}\le Ck \bigg(\|\nabla H\|_{L^\infty}\Bigl(\|\nabla H\|_{L^\infty}+\|\nabla^2 H\|_{L^\infty}\Bigr)^{\frac{1}{2}}\\
+ \|\grad H\|^2_{L^\infty}+\|\grad H\|_{L^\infty}\|\grad^2 H \|_{L^\infty}e^{ \| \grad^2H\|_{L^\infty}}\bigg).
\end{multline}
Note that if the norms in the left hand side and in the right hand side were comparable, we would have an inequality of the form
$$
\|\nabla H\|_{X}\le C\|\nabla H\|^{3/2}_{X},
$$
which is impossible when $H$ is small enough.
Thus, the next step is to use interpolation estimates to compare the different norms of $\nabla H$ appearing in \eqref{eq:grad H}.
More precisely, we want to use the following elementary interpolation estimates.
\begin{lem}
\label{lem:interp}
For any smooth compactly supported function $u:\br^n\to \br$,
$$
\|\nabla^{\ell}u \|_{L^2} \leq \|u\|_{L^2}^{1-\ell/m}\|\nabla^{m} u\|_{L^2}^{\ell/m} \qquad \forall\,1\leq \ell\leq m.
$$
\end{lem}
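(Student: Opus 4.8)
The statement is the classical Gagliardo--Nirenberg interpolation inequality in $L^2$, and the cleanest route is through the Fourier transform, where it reduces to a one-line application of H\"older's inequality (equivalently, to the log-convexity of the moments of a fixed measure). The plan is as follows.

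First I would pass to the Fourier side. Since $u$ is smooth and compactly supported, Plancherel's theorem applies and gives, with the natural convention for the full array of $\ell$-th order derivatives,
$$
\|\nabla^{\ell} u\|_{L^2}^2 = \int_{\br^n} |\xi|^{2\ell}\,|\hat u(\xi)|^2\,d\xi ,
$$
and likewise for $\ell$ replaced by $0$ and by $m$. The multinomial coefficients produced by $|\xi|^{2\ell}=(\sum_j \xi_j^2)^{\ell}$ are precisely those matching the squared $L^2$ norm of the symmetric derivative tensor, so the identity holds with the implied convention; any comparable choice of norm only changes the constant, which is harmless here.

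Next I would introduce the finite positive measure $d\mu := |\hat u(\xi)|^2\,d\xi$ and observe that the three quantities above are moments of $\mu$: one has $\|\nabla^{\ell}u\|_{L^2}^2 = \int |\xi|^{2\ell}\,d\mu$, $\|u\|_{L^2}^2 = \int d\mu$, and $\|\nabla^{m}u\|_{L^2}^2 = \int |\xi|^{2m}\,d\mu$. Writing $|\xi|^{2\ell} = (|\xi|^{2m})^{\ell/m}\cdot 1^{1-\ell/m}$ and applying H\"older's inequality with the conjugate exponents $p = m/\ell$ and $p' = m/(m-\ell)$ yields
$$
\int |\xi|^{2\ell}\,d\mu \;\le\; \Big(\int |\xi|^{2m}\,d\mu\Big)^{\ell/m}\Big(\int d\mu\Big)^{1-\ell/m}.
$$
In terms of the norms this reads $\|\nabla^{\ell}u\|_{L^2}^2 \le \big(\|\nabla^m u\|_{L^2}^2\big)^{\ell/m}\big(\|u\|_{L^2}^2\big)^{1-\ell/m}$, and taking square roots gives the claim (the case $\ell = m$ being a trivial equality).

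There is no genuine obstacle in this argument, which is why I expect it to be short; the only point requiring a little care is the bookkeeping in the first step, namely fixing the precise meaning of $\|\nabla^{\ell}u\|_{L^2}$ for the multi-indexed array of $\ell$-th order derivatives and checking that the Plancherel identity holds with the weight $|\xi|^{2\ell}$. If one prefers to avoid invoking H\"older at a general exponent, I would instead establish the discrete log-convexity $\|\nabla^{\ell}u\|_{L^2}^2 \le \|\nabla^{\ell-1}u\|_{L^2}\,\|\nabla^{\ell+1}u\|_{L^2}$ by Cauchy--Schwarz on the Fourier side (splitting $|\xi|^{2\ell} = |\xi|^{\ell-1}\,|\xi|^{\ell+1}$) and then iterate this convexity between the endpoints $0$ and $m$ to recover the stated interpolation; this presentation makes the convex structure especially transparent and uses only Cauchy--Schwarz.
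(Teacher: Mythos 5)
Your proof is correct and takes essentially the same route as the paper's: both pass to the Fourier side via Plancherel and apply H\"older's inequality with conjugate exponents $m/\ell$ and $m/(m-\ell)$, your moment formulation with $d\mu=|\hat u(\xi)|^2\,d\xi$ being only a cosmetic repackaging of the paper's splitting $|\hat u|^2=|\hat u|^{2\ell/m}|\hat u|^{2(m-\ell)/m}$. The iterated Cauchy--Schwarz variant you mention at the end is a fine alternative but adds nothing needed here.
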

\begin{proof}
The proof is simple using Fourier analysis:
using H\"older inequality with the conjugate exponents $m/\ell$ and $(m-\ell)/\ell$, we have
\begin{align*}
\int |\xi|^{2\ell}|\hat u|^2&=
\int \bigl(|\xi|^{2\ell}|\hat u|^{2\ell/m}\bigr) |\hat u|^{2(m-\ell)/m}
\leq \||\xi|^{2\ell}|\hat u|^{2\ell/m}\|_{L^{m/\ell}} \| |\hat u|^{2(m-\ell)/m}\|_{L^{(m-\ell)/\ell}}\\
&=\biggl(\int |\xi|^{2m}|\hat u|^2\biggr)^{\ell/m}\biggl(\int |\hat u|^2\biggr)^{1-\ell/m}.
\end{align*}
Since $\|\nabla^{k}u \|_{L^2(\br^n)}=\||\xi|^{k}\hat u \|_{L^2(\br^n)}$  for all $k \geq 0$,
the result follows.
\end{proof}
%
Since \eqref{eq:grad H} involves $L^1$ and $L^\infty$ norms, to apply Lemma \ref{lem:interp} we use shall use other interpolation inequalities. More precisely, we recall the classical Nash inequality:
\begin{equation}
\label{eq:Nash}
\|u\|_{L^2(\br^n)}^{1+2/n} \leq C_n\|u\|_{L^1(\br^n)}^{2/n}\|\nabla u\|_{L^2(\br^n)}.
\end{equation} 
We now set $n=6$, and we let $s$ be a number larger than $n/2=3$ to be fixed later.
Applying \eqref{eq:Nash} to $\partial_i H:\br^n\to \br$, $i=1,\ldots,n$, we get
$$
\|\nabla H\|_{L^2}^{1+2/n} \leq C \|\nabla H\|_{L^1}^{2/n}\|\nabla^2 H\|_{L^2}.
$$
Let us recall that, by assumption, $H$ is supported in $B_{2\rho}$.
Hence, we can apply \eqref{eq:Sobolev} both with $u=\partial_iH$ and $u=\partial_{ij}H$ to get
$$
\|\nabla H\|_{L^\infty}\leq C \|\nabla^{s+1} H\|_{L^2},\qquad \|\nabla^2 H\|_{L^\infty}
\leq C\|\nabla^{s+2} H\|_{L^2}.
$$
Note also that, by Poincar\'e inequality in $B_{2\rho}$, $\|\nabla^{s+1} H\|_{L^2}\leq C\|\nabla^{s+2} H\|_{L^2}$.
Combining all these estimates with \eqref{eq:grad H}, we get
\begin{equation}
\label{eq:grad H2}
\|\nabla H\|_{L^2}^{1+2/n}\le Ck \(\|\nabla^{s+2} H\|^{3/2}_{L^2}+\|\nabla^{s+2} H\|^{2}_{L^2}+\|\nabla^{s+2} H\|^{2}_{L^2}e^{ \|\nabla^{s+2} H\|_{L^2}}\)^{2/n}\|\nabla^2 H\|_{L^2}.
\end{equation}
To conclude we recall that, by assumption, $\|H\|_{W^{r,2}} \leq \eps$, where $r\geq 22$,
and we want to obtain a contradiction when $\eps$ is sufficiently small.
To this aim, we first note that, for $s \leq r-2$,
$$
\|\nabla^{s+2} H\|_{L^2} \leq \|H\|_{W^{r,2}} \leq \eps \ll 1.
$$
This implies that the quadratic terms in \eqref{eq:grad H2} are much smaller than the term with the power $3/2$, 
therefore \eqref{eq:grad H2} yields
$$
\|\nabla H\|_{L^2}^{1+2/n}\le Ck \|\nabla^{s+2} H\|_{L^2}^{3/n}\|\nabla^2 H\|_{L^2}.
$$
Then
we apply Lemma \ref{lem:interp} with $u=\partial_iH$, $\ell=s+1$, and $m=r-1$ to get
$$
\|\nabla^{s+2} H\|_{L^2} \leq C\|\nabla H\|_{L^2}^{\frac{r-s-2}{r-1}}\|\nabla^{r} H\|_{L^2}^{\frac{s+1}{r-1}}
\leq C\eps^{\frac{s+1}{r-1}}\|\nabla H\|_{L^2}^{\frac{r-s-2}{r-1}},
$$
therefore
\begin{equation}
\label{eq:gradH3}
\|\nabla H\|_{L^2}^{1+2/n}\le 
Ck \Bigl(\|\nabla H\|_{L^2}^{\frac{r-s-2}{r-1}}\eps^{\frac{s+1}{r-1}} \Bigr)^{3/n}\|\nabla^2 H\|_{L^2}.
\end{equation}
Also, by Lemma \ref{lem:interp} with $u=\partial_iH$, $\ell=1$, $m=r-1$, we have
\begin{equation}
\label{eq:interp H2}
\|\nabla^2 H\|_{L^2} \leq C\|\nabla H\|_{L^2}^{\frac{r-2}{r-1}}\|\nabla^{r} H\|_{L^2}^{\frac{1}{r-1}} \leq C\eps^{\frac{1}{r-1}}\|\nabla H\|_{L^2}^{\frac{r-2}{r-1}}.
\end{equation}
Thus, combining \eqref{eq:gradH3} and \eqref{eq:interp H2}, we obtain
\begin{align*}
&\|\nabla H\|_{L^2}^{1+2/n}\leq Ck \eps^{\frac{1+3(s+1)/n}{r-1}} \|\nabla H\|_{L^2}^{\frac{3(r-s-2)}{n(r-1)}+\frac{r-2}{r-1}}.
\end{align*}
We finally choose $s$. Since $s$ is any exponent larger than $n/2=3$ and less than $r-2 \geq 20$, we fix $s=4$.
Then, the inequality above becomes
\begin{align*}
&\|\nabla H\|_{L^2}^{4/3}\le Ck\eps^{\frac{1+3(s+1)/n}{r-1}} \|\nabla H\|_{L^2}^{\frac{r-6}{2(r-1)} +\frac{r-2}{r-1}}=Ck\eps^{\frac{7}{2(r-1)}} \|\nabla H\|_{L^2}^{\frac{3r-10}{2(r-1)}}.
\end{align*}
Since $r\geq 22$ by assumption, we see that $\frac{3r-10}{2(r-1)}\geq 4/3$, thus we obtain
\begin{align*}
&1\le Ck\eps^{\frac{7}{2(r-1)}} \|\nabla H\|_{L^2}^{\frac{3r-10}{2(r-1)} - \frac{4}3} \leq Ck\eps^{\frac{7}{2(r-1)}+\frac{3r-10}{2(r-1)} - \frac{4}3}=Ck\eps^{1/6},
\end{align*}
which is false for $\eps$ small enough.
This shows the desired contradiction and completes the proof. \qed

------------------



%
%


\bigskip

{\it Acknowledgments:}   The author is grateful to Cl\'ement Mouhot for proposing this problem and for interesting discussions. Also, we wish to thank Pierre R\"aphael for his useful comments during the preparation of this manuscript.
The author would also like to acknowledge the L'Or\'eal Foundation for supporting this project via the L'Or\'eal-UNESCO Award \emph{For Women in Science France fellowship}.


\begin{thebibliography}{9}
%
%
\bibitem{acf}
L. Ambrosio, M. Colombo, and A. Figalli.
{\em Duke Math. J.}, 166(18):3505--3568, 2017.

%
%
%
 
 \bibitem{Arn2}
V. I. Arnold.
Sur la g\'eom\'etrie diff\'erentielle des groupes de Lie de dimension infinie et ses applications \`a l'hydrodynamique des fluides parfaits. 
{\em Ann. Inst. Fourier (Grenoble)}, 16:319--361, 1966. 

\bibitem{Ar}
A.~A. Arsenev.
\newblock Existence in the large of a weak solution of {V}lasov's system of
  equations.
\newblock {\em \v Z. Vy\v cisl. Mat. i Mat. Fiz.}, 15:136--147, 276, 1975.

%
%
%
%
%
%
\bibitem{Bin}
J. Binney and S. Tremaine.
{\em Galactic Dynamics.} Princeton: Princeton University Press, 1987.
%
%
%
\bibitem{bbc} 
A.~Bohun, F.~Bouchut, and G.~Crippa. {Lagrangian solutions to the Vlasov-Poisson equation with
$L^1$ density.} {\em J. Differential Equations} 260(4):3576--3597,
2016.

%
%
%
%
%
%
%
%
%
%
%
%
%
%
%
%
%
%
%
%
%
%
%
%
%
%
%
%
%
%
%
%
%
%
%
%
%


\bibitem{CS}  A. Choffrut and V. \v{S}ver\'ak.
Local structure of the set of steady-state solutions to the 2D incompressible Euler equations. {\em Geom. Funct. Anal.}, 22(1):136--201, 2012.



\bibitem{dpl} {R.~J.~DiPerna and P.-L.~Lions.} {Solutions globales d'\'equations du type Vlasov-Poisson.} (French) [Global solutions of Vlasov-Poisson type equations] {\em C. R. Acad. Sci. Paris S\'er. I Math.}, {307}:655-658, 1988.

\bibitem{dpl3} {R.~J.~DiPerna and P.-L.~Lions.} {Global weak solutions of kinetic equations.} {\em Rend. Sem. Mat. Univ. Politec. Torino},
{46}:259--288, 1988.

\bibitem{dpl2} {R.~J.~DiPerna \& P.-L.~Lions.} {Global weak solutions of Vlasov-Maxwell systems.} {\em Comm.\ Pure Appl.\ Math.},
{42}:729--757, 1989.

%
%
%
%
%
%
%
%
%
%
\bibitem{Fri}
A. M. Fridman, V. L. Polyachenko. {\em Physics of Gravitating Systems I}. New York: Springer-Verlag 1984
%
%




%
%
%
%
%
%
%
%
%
%
%
%
%
%
%
%
%
%
%

%
%
%
%
%

\bibitem{GPI}
M. Griffin-Pickering and M. Iacobelli.
Recent developments on the well-posedness theory for Vlasov-type equations.
{\em Springer Proc. Math. Stat.}, 352.
Springer, Cham, 2021, 301--319.


\bibitem{hh}
E. Horst and R. Hunze.
{Weak solutions of the initial value problem for the unmodified nonlinear Vlasov equation.}
{\em Math. Methods Appl. Sci.} {6}:262--279, 1984. 

%



\bibitem{iord} S.~V.~Iordanskii. {The Cauchy problem for the kinetic equation of plasma.} {\em Trudy Mat.~Inst.~Steklov.}, 60:181--194, 1961. 


%


\bibitem{Jea}
J. Jeans. 
On the theory of star-streaming and the structure of the universe.
{\em Mon. Not. R. Astron. Soc.} 76:70--84, 1915, and 76:552--567, 1916.

%
%
%

%

%
%

\bibitem{Lem} 
M. Lemou.  Extended rearrangement inequalities and applications to some quantitative stability results. {\em Comm. Math. Phys.}, 348(2):695--727, 2016.

\bibitem{LMR}
M. Lemou, F. M\'ehats, and P. R\"aphael.
Orbital stability of spherical galactic models. {\em Invent. Math.}, 187(1):145--194, 2012.



\bibitem{lp} {P.-L.~Lions and B.~Perthame.} {Propagation of moments and regularity for the 3-dimensional Vlasov-Poisson system.} {\em Invent.\ Math.}, 105:415--430, 1991.

\bibitem{M} C. Mouhot.
Stabilit\'e orbitale pour le syst\`eme de Vlasov-Poisson gravitationnel (d'apr\`es Lemou-M\'ehats-Rapha\"el, Guo, Lin, Rein et al.). (French) [Orbital stability for the gravitational Vlasov-Poisson system (after Lemou-M\'ehats-Rapha\"el, Guo, Lin, Rein et al.)] S\'eminaire Bourbaki. Vol. 2011/2012. Expos\'es 1043-1058. {\em Ast\'erisque} No. 352 (2013), Exp. No. 1044, vii, 35-82.


%

%
%
%
%
%
%
%
%

\bibitem{Pfa}
K.~Pfaffelmoser.
\newblock Global classical solutions of the {V}lasov-{P}oisson system in three
  dimensions for general initial data.
\newblock {\em J. Differential Equations}, 95(2):281--303, 1992.




\bibitem{Sch}
J.~Schaeffer.
\newblock Global existence of smooth solutions to the {V}lasov-{P}oisson system
  in three dimensions.
\newblock {\em Comm. Partial Differential Equations}, 16(8-9):1313--1335, 1991.


\bibitem{uo}  S. Ukai and T. Okabe. {On classical solutions in the large in time of two-dimensional Vlasov's equation.} {\em Osaka J.~Math.}, 15:245--261, 1978. 

%
%

\bibitem{Vla1}
A. A. Vlasov. Zh. Eksper. {\em Teor. Fiz.} 8:291, 1938.

\bibitem{Vla2} A. A. Vlasov. Vlasov equation and plasma dispersion relation. {\em J. Phys.
(U.S.S.R.)} 9:25, 1945.

%
%
%




\end{thebibliography}
\end{document}